\newtheorem{theorem}{Theorem}[section]
\newtheorem{lemma}[theorem]{Lemma}
\newtheorem{corollary}[theorem]{Corollary}
\newtheorem{proposition}[theorem]{Proposition}
\numberwithin{equation}{section}
\theoremstyle{definition}
\newtheorem{definition}[theorem]{Definition}
\newtheorem{example}[theorem]{Example}
\newtheorem{remark}[theorem]{Remark}
\newcommand{\veps}{\varepsilon}
\begin{document}

\title{Asymptotic behaviors of the integrated density of states for random Schr\"{o}dinger operators associated with Gibbs Point Processes}
\author{Yuta Nakagawa\footnote{Graduate School of Human and Environmental Studies, Kyoto University, Japan.}
\footnote{E-mail: nakagawa.yuta.58n@st.kyoto-u.ac.jp}}

\maketitle

\begin{abstract}
The asymptotic behaviors of the integrated density of states \(N(\lambda)\) of Schr\"{o}dinger operators with nonpositive potentials associated with Gibbs point processes are studied. It is shown that for some Gibbs point processes, the leading terms of \(N(\lambda)\) as \(\lambda\downarrow-\infty\) coincide with that for a Poisson point process, which is known. Moreover, for some Gibbs point processes corresponding to pairwise interactions, the leading terms of \(N(\lambda)\) as \(\lambda\downarrow-\infty\) are determined, which are different from that for a Poisson point process.
\flushleft{{\bf Keywords:} Random Schr\"{o}dinger operator; Density of states; Gibbs point process}
\flushleft{{\bf MSC2020 subject classifications:} Primary 82B44, Secondary 60G55; 60G60; 60K35.}
\end{abstract}

\section{Introduction}
We consider the random Schr\"{o}dinger operator on \(L^2(\mathbb{R}^d,dx)\) defined by
\begin{equation}\label{S operator}
H_{\eta_{\omega}}=-\Delta+V_{\eta_{\omega}},\quad V_{\eta_{\omega}}(x)= \int_{\mathbb{R}^d} u_{0}(x-y)\eta_{\omega}(dy),
\end{equation}
where \(u_{0}\) is a nonpositive measurable function on \(\mathbb{R}^d\), and \(\eta_{\omega}\) is a point process (see Section \ref{Gibbs sec}). 
We call \(u_0\) the \emph{single site potential}. 
We assume that \(\eta_{\omega}\) is stationary and ergodic (see Section \ref{Gibbs sec}). The \emph{integrated density of states (IDS)} \(N(\lambda)\) of the Schr\"{o}dinger operator is the nondecreasing function formally given by
\begin{equation}
\lim_{L\to\infty} \frac{1}{|\Lambda_{L}|}\#\{\text{eigenvalues of \(H_{\eta_{\omega},L}^{D}\) less than or equal to \(\lambda\)}\},
\end{equation}
where \(\Lambda_L\) is the box \((-L/2,L/2)^d\subset\mathbb{R}^d\), \(|\Lambda_{L}|\) denotes Lebesgue measure of \(\Lambda_{L}\), and \(H_{\eta_{\omega},L}^{D}\) is the operator \(H_{\eta_{\omega}}\) restricted to \(\Lambda_{L}\) with Dirichlet boundary condition.
Because of the ergodicity of \(\eta_{\omega}\), both \(N(\lambda)\) and the spectrum of \(H_{\eta_{\omega}}\) are independent of \(\omega\) almost surely. Moreover, \(N(\lambda)\) increases only on the spectrum.
See \cite{CL, PF} for the precise definition and the properties of the IDS. 

The asymptotic behaviors of the IDS near the infimums of the spectra are well-studied. 
For a stationary Poisson point process (see \cite{Dereudre} for the definition) and a nonpositive and integrable single site potential \(u_0\) which is continuous at the origin and has a minimum value of less than zero there, the spectrum is \(\mathbb{R}\) apart from some exceptional cases (see \cite{AIKN} for details), and it holds that
\begin{equation}\label{Pastur tails}
\log N(\lambda)\sim -\frac{\lambda\log|\lambda|}{u_0(0)}\quad(\lambda\downarrow-\infty),
\end{equation}
which is proved by Pastur in \cite{Pastur} (see also \cite[(9.4) Theorem]{PF}), where we write \(f(\lambda)\sim g(\lambda)\ (\lambda\downarrow-\infty)\) if \(f(\lambda)/g(\lambda)\) converges to one as \(\lambda\downarrow-\infty\).
See \cite{KP} for the asymptotic behavior for a singular nonpositive single site potential, and \cite{DoV, Nakao, Pastur} for that for a nonnegative single site potential.

The aim of this work is to investigate the asymptotic behaviors of \(N(\lambda)\) as \(\lambda\downarrow-\infty\) for nonpositive single site potentials and Gibbs point processes: point processes with interactions between the points.
We mainly deal with pairwise interaction processes (i.e. Gibbs point processes corresponding to pairwise interactions: the energy of the points \(\{x_j\}\) is \(\sum_{i<j}\varphi(x_i-x_j)\), where \(\varphi\) is a nonnegative symmetric function on \(\mathbb{R}^d\) with compact support). In \cite{Sznitman}, Sznitman proved a result that is equivalent to determining the leading term of the asymptotic behavior of the IDS for a nonnegative single site potential with compact support and a pairwise interaction process.

From the proof of \eqref{Pastur tails} in \cite[(9.4) Theorem]{PF}, we can find that for a Poisson point process, the probability that many points exist in a small domain affects the leading term of the IDS.
Therefore, for a Gibbs point process corresponding to a weak interaction that does not prevent the points from gathering (e.g. Example \ref{weak pair}), we expect that the IDS satisfies \eqref{Pastur tails}. This is proved in Theorem \ref{thm1}. However, for a general Gibbs point process, \eqref{Pastur tails} does not hold. In Corollary \ref{Strauss}, for a pairwise interaction process satisfying some conditions, we determine the leading term of the asymptotic behavior of the IDS:
\begin{equation}\label{main cor}
\log N(\lambda) \sim -\frac{\varphi(0)}{2\|u_0\|^2_{S}}\lambda^2\quad (\lambda\downarrow-\infty),
\end{equation}
where \(\|u_0\|_{S}^2\), defined in Section \ref{Pairwise sec}, depends only on \(u_0\) and the support \(S\) of \(\varphi\). 
This implies that the IDS decays much faster than that for a Poisson point process.
In this case, because of the repulsion of the points, the probability that some clusters of many points occur affects the leading term of the IDS (see \eqref{lower prob} and \eqref{pairwise leading}). 
The main tools for the proof of \eqref{main cor} are Proposition \ref{prop}, which is used in the proof of \eqref{Pastur tails} in \cite[(9.4) Theorem]{PF}, and the upper estimate of the Laplace functional in Proposition \ref{upper lap}.

This paper is organized as follows. Section \ref{Gibbs sec} introduces the Gibbs point processes (see e.g. \cite{Dereudre, Preston, Ruelle}). In Section \ref{Weak sec}, we prove that the leading term of the asymptotic behavior of the IDS for a Gibbs point process corresponding to a weak interaction is identical to \eqref{Pastur tails}. In Section \ref{Pairwise sec}, we treat a pairwise interaction process satisfying some conditions and determine the leading term of the asymptotic behavior of the corresponding IDS, which is different from \eqref{Pastur tails}.

\section{Gibbs point processes}\label{Gibbs sec}

Let \((\mathcal{C},\mathcal{F})\) denote the space of all locally finite measures on \((\mathbb{R}^d,\mathcal{B}(\mathbb{R}^d))\) which can be written as a countable sum \(\sum_{j=1}^{n}\delta_{x_j}\ (n\in\mathbb{Z}_{\geq0}\cup\{+\infty\},\ x_j\in\mathbb{R}^d,\ x_i\neq x_j\ \text{for any}\ i\neq j)\), equipped with the \(\sigma\)-algebra \(\mathcal{F}\) generated by \(\{M_{\Lambda}\}_{\Lambda\in\mathcal{B}(\mathbb{R}^d)}\), where \(\delta_x\) denotes the Dirac measure at \(x\in\mathbb{R}^d\), and for every \(\Lambda\in\mathcal{B}(\mathbb{R}^d)\), \(M_{\Lambda}\) is the function on \(\mathcal{C}\) defined by \(M_{\Lambda}(\eta)=\#(\Lambda\cap\mathrm{supp}\,\eta)\).
We note that all \(\eta\in\mathcal{C}\) is \emph{simple} (i.e. \(\eta(\{x\})\leq 1\) for any \(x\in\mathbb{R}^d\)).
We call a \(\mathcal{C}\)-valued random element a \emph{point process}.

Let \(\mathcal{C}_{f}\) be the set of all finite measures in \(\mathcal{C}\). We introduce an energy function to define the interaction between the points. 

\begin{definition}
An \emph{energy function} is a measurable function \(U\) from \(\mathcal{C}_{f}\) to \(\mathbb{R}\cup\{+\infty\}\) such that:
\begin{itemize}
\item \(U\) maps the null measure to \(0\);
\item if \(U(\eta)=+\infty\), then \(U(\delta_x+\eta)=+\infty\) for all \(x\in\mathbb{R}^d\setminus\mathrm{supp}\,\eta\);
\item \(U(\tau_{x}\eta)=U(\eta)\) for any \(\eta\in\mathcal{C}_{f}\) and any \(x\in\mathbb{R}^d\), where \(\tau_{x}\eta(\cdot)=\eta(\tau_{-x}\,\cdot)\), and \(\tau_x\) is the translation by vector \(x\).
\end{itemize}
\end{definition}

We introduce a pairwise energy function: an energy function corresponding to a pairwise interaction.
\begin{definition}
An energy function defined by
\begin{equation}\label{pairwise function}
U(\eta)=\sum_{\substack{\{x,y\}\subset\,\mathrm{supp}\,\eta\\ x\neq y}}\varphi(x-y)
\end{equation}
is called a \emph{pairwise energy function}, where \(\varphi\) is a measurable and symmetric (i.e. \(\varphi(x)=\varphi(-x)\)) function from \(\mathbb{R}^d\) to \(\mathbb{R}_{\geq 0}\cup\{+\infty\}\) with compact support.
\end{definition}

For a pairwise energy function \(U\) defined by \eqref{pairwise function} and a bounded \(\Lambda\in\mathcal{B}(\mathbb{R}^d)\), we define
\begin{equation}
U_{\Lambda}(\eta)=\sum_{\substack{\{x,y\}\subset\,\mathrm{supp}\,\eta\\ \{x,y\}\cap\Lambda\neq\emptyset,\ x\neq y}}\varphi(x-y)\quad(\eta\in\mathcal{C}_{f}).
\end{equation}
For an energy function \(U\) except for pairwise energy functions and a bounded \(\Lambda\in\mathcal{B}(\mathbb{R}^d)\), we set
\begin{equation}
U_{\Lambda}(\eta)=U(\eta)-U(\eta_{\Lambda^c})\quad(\eta\in\mathcal{C}_{f}),
\end{equation}
where \(\eta_{\Lambda}\) denotes the measure \(\eta(\cdot\cap\Lambda)\), \(\Lambda^c=\mathbb{R}^d\setminus \Lambda\), and \(\infty-\infty=0\) for convenience.
We can see \(U_{\Lambda}(\eta)\) means the variation of the energy when adding \(\eta_{\Lambda}\) to \(\eta_{\Lambda^c}\). 

In this paper, we assume the finite range property:
\begin{definition}
We say that an energy function \(U\) has a \emph{finite range \(R>0\)} if for all bounded \(\Lambda\in\mathcal{B}(\mathbb{R}^d)\) and all \(\eta\in\mathcal{C}_{f}\), it holds that 
\begin{equation}
U_{\Lambda}(\eta)=U_{\Lambda}(\eta_{\Lambda+B(0,R)}),
\end{equation}
where \(B(x,R)\) is the closed ball centered at \(x\) with radius \(R\), and \(\Lambda+B(0,R)\) denotes the set \(\{x+y\mid x\in\Lambda,\ y\in B(0,R)\}\).
\end{definition}
A pairwise energy function defined by \eqref{pairwise function} has a finite range \(R>0\) if and only if the support of \(\varphi\) is included in \(B(0,R)\).

For an energy function \(U\) with a finite range \(R>0\), we can define
\begin{equation}
U_{\Lambda}(\eta)=U_{\Lambda}(\eta_{\Lambda+B(0,R)}),\ h(x,\eta)=U_{\{x\}}(\eta+\delta_{x}) \quad(\eta\in\mathcal{C},\ x\in\mathbb{R}^d\setminus\mathrm{supp}\,\eta).
\end{equation}
The function \(h\) is called the \emph{local energy function}, which means the variation of the energy when adding \(\delta_x\) to \(\eta\).

Let \(\mathcal{F}_{\Lambda}\) denote the \(\sigma\)-algebra generated by \(\{M_{\Lambda'}\}_{\Lambda'\subset\Lambda,\ \Lambda'\in\mathcal{B}(\mathbb{R}^d)}\). When an energy function \(U\) has a local energy function bounded below, for every \(\gamma\in\mathcal{C}\) and every bounded \(\Lambda\in\mathcal{B}(\mathbb{R}^d)\) with positive Lebesgue measure, we define the probability measure \(P_{\Lambda,\gamma}\) on \((\mathcal{C},\mathcal{F}_{\Lambda})\) by
\begin{equation}
P_{\Lambda,\gamma}(d\eta)=\frac{1}{Z_{\Lambda}(\gamma)}e^{-U_{\Lambda}(\eta_{\Lambda}+\gamma_{\Lambda^c})}\pi^1(d\eta),
\end{equation}
where \(\pi^z\) is the distribution of the Poisson point process with intensity \(z\) (see \cite{Dereudre} for the definition), and \(Z_{\Lambda}(\gamma)\) is a normalizing constant:
\begin{equation}
Z_{\Lambda}(\gamma)= \int e^{-U_{\Lambda}(\eta_{\Lambda}+\gamma_{\Lambda^c})}\pi^1(d\eta).
\end{equation} 
We see
\begin{equation}\label{partition lower}
Z_{\Lambda}(\gamma)\geq \int_{\{M_{\Lambda}=0\}}\pi^1(d\eta)=e^{-|\Lambda|}>0.
\end{equation} 
Moreover, if the local energy function \(h\) is bounded below by \(a\in\mathbb{R}\), since for any distinct points \(x_1,\ldots,x_n\in\Lambda\),
\begin{equation}
U_{\Lambda}(\sum_{j=1}^n \delta_{x_j}+\gamma_{\Lambda^c})=\sum_{j=1}^{n} h(x_j,\sum_{i=1}^{j-1}\delta_{x_i}+\gamma_{\Lambda^c})\geq an,
\end{equation}
we have
\begin{equation}\label{partition upper}
Z_{\Lambda}(\gamma)\leq\int_{\mathcal{C}}e^{-aM_{\Lambda}(\eta)}\pi(d\eta)=\exp\big(|\Lambda|(e^{-a}-1)\big)<+\infty.
\end{equation}

Now we introduce the Gibbs point processes.
\begin{definition}\label{Gibbs def}
We assume that an energy function \(U\) has a finite range, and its local energy function is bounded below.
A point process is a \emph{Gibbs point process} for the energy function \(U\) if its distribution \(P\) on \(\mathcal{C}\) satisfies the following conditions:
\begin{itemize}
\item for all bounded \(\Lambda\in\mathcal{B}(\mathbb{R}^d)\) with positive Lebesgue measure and all bounded \(\mathcal{F}\)-measurable function \(f\), it holds that
\begin{equation}\label{DLReq}
\int_{\mathcal{C}}f(\eta)P(d\eta)=\int_{\mathcal{C}}\int_{\mathcal{C}}f(\eta_{\Lambda}+\gamma_{\Lambda^c})P_{\Lambda,\gamma}(d\eta)P(d\gamma);
\end{equation}
\item for \(P\)-almost all \(\gamma\in\mathcal{C}\), \(U(\eta)<+\infty\) whenever \(\eta\in\mathcal{C}_{f}\) and \(\mathrm{supp}\,\eta\subset\mathrm{supp}\,\gamma\).
\end{itemize} 
\end{definition}

The equation \eqref{DLReq} is called the \emph{Dobrushin-Lanford-Ruelle equation (DLR equation)}. 
We call a Gibbs point process for a pairwise energy function a \emph{pairwise interaction process}. 
We note that if \(U\) is identically zero, the corresponding Gibbs point process is the Poisson point process with intensity one.

We say a point process is \emph{stationary} if its distribution \(P\) satisfies \(P(\tau_x A)=P(A)\) for all \(x\in\mathbb{R}^d\) and all \(A\in\mathcal{F}\). Under the assumption in Definition \ref{Gibbs def}, at least one stationary Gibbs point process for \(U\) exists (see \cite{DDG, DeV}). 

A point process with a distribution \(P\) is called \emph{ergodic} if \(P(A)\) is either zero or one for any \(A\in\mathcal{F}\) such that \(P(A\ominus \tau_x A)=0\) for all \(x\in\mathbb{R}^d\), where \(A\ominus B\) denotes the symmetric difference of \(A\) and \(B\). 
If an energy function with a finite range \(R>0\) has a local energy function bounded below by \(a\in\mathbb{R}^d\), and \(R^d e^{-a}\) is sufficiently small, then the corresponding DLR equation has a unique solution (see \cite{HT} for more details). The uniqueness guarantees the stationarity and ergodicity of the corresponding Gibbs point process (see \cite[Section 4]{Preston}).

We end this section by introducing stochastic domination. We say that a measurable function \(f\) on \((\mathcal{C},\mathcal{F})\) is \emph{increasing}, if \(f(\eta)\leq f(\gamma)\) whenever \(\mathrm{supp}\,\eta\subset\mathrm{supp}\,\gamma\). If an energy function \(U\) has a local energy function bounded below by \(-\log z\) for some \(z>0\), then the corresponding probability measure \(P_{\Lambda,\gamma}\) is \emph{stochastically dominated} by \(\pi^z\): for any bounded \(\Lambda\in\mathcal{B}(\mathbb{R}^d)\) and any bounded \(\mathcal{F}_{\Lambda}\)-measurable increasing function \(f\), it holds that
\begin{equation}
\int_{\mathcal{C}}f\,dP_{\Lambda,\gamma} \leq \int_{\mathcal{C}}f\,d\pi^{z},
\end{equation} 
(see \cite{GK} for the proof).
If there is a Gibbs point process for \(U\), then for its distribution \(P\), we also have
\begin{equation}\label{dom eq}
\int_{\mathcal{C}}f\,dP \leq \int_{\mathcal{C}}f\,d\pi^{z},
\end{equation}
from the DLR equation.


\section{Weak interactions}\label{Weak sec}

As we see below, for a bounded integrable single site potential and a stationary and ergodic Gibbs point process with a local energy function bounded below, the Schr\"{o}dinger operator \eqref{S operator} is essentially self-adjoint on \(C_{c}^{\infty}\) almost surely, and the corresponding IDS exists, where \(C_{c}^{\infty}\) denotes the set of all smooth functions on \(\mathbb{R}^d\) with compact support. We note that all Schr\"{o}dinger operators in this paper have these properties. 

Let \(P\) be the distribution of the Gibbs point process. From \eqref{dom eq}, for any \(p\in [1,\infty)\) and any bounded \(\Lambda\in\mathcal{B}(\mathbb{R}^d)\), we have
\begin{equation}\label{self-adjointness}
\int_{\mathcal{C}}\int_{\Lambda}|V_{\eta}(x)|^p \,dx\, P(d\eta) \leq \int_{\mathcal{C}}\int_{\Lambda}|V_{\eta}(x)|^p \,dx\, \pi^{z}(d\eta)<\infty,
\end{equation}
where \(z>0\) is a constant such that the local energy function is bounded below by \(-\log z\).
The essential self-adjointness is proved in \cite[Proposition V.3.2]{CL} using \eqref{self-adjointness} and the stationarity. For the existence of the IDS, see \cite[Chapter VI]{CL}.

Now, in this section, we consider a Gibbs point process for an energy function satisfying the following condition.

\begin{description}
\item[(W)]There exists a positive constant \(\veps_0\) and a positive function \(r\) on \((1,\infty)\), such that
\begin{equation}
\lim_{x\to\infty}r(x)=0,\quad\lim_{x\to\infty}\frac{\log r(x)}{\log x}=0,
\end{equation}
and for any \(0<\veps\leq\veps_0\),
\begin{equation}
\sup_{M_{\mathbb{R}^d}(\eta_{B(0,r(x))})=\lceil(1+\veps)x\rceil} U(\eta_{B(0,r(x))})=o(x\log x)\quad(x\to\infty),
\end{equation}
where \(\lceil x\rceil\) denotes the least integer greater than or equal to \(x\).
\end{description}
Condition \textbf{(W)} implies that the interaction between many points in a small region is sufficiently weak. 

\begin{example}[Energy function with a bounded local energy function]\label{weak area}
Some energy functions have bounded local energy functions. An example is an \emph{area energy function}:
\begin{equation}
U(\sum_{j=1}^{n}\delta_{x_j})=\Big|\bigcup_{j=1}^{n}B(x_j,R)\Big|,
\end{equation}
which has the local energy function \(h\) satisfying \(0\leq h\leq |B(0,R)|\), where \(R>0\) is a constant.
If a local energy function \(h\) of an energy function \(U\) is bounded above by \(a\in\mathbb{R}\), then we have \(U(\eta)\leq aM_{\mathbb{R}^d}(\eta)\) for all \(\eta\in\mathcal{C}_{f}\).
Hence, the energy function \(U\) satisfies condition \textbf{(W)}.  
\end{example}

\begin{example}[Pairwise energy function]\label{weak pair}
In general, the local energy functions of the pairwise energy functions are unbounded. However, a pairwise energy function such that the closer the two points approach, the weaker the interaction gets, could satisfy condition \textbf{(W)}.
For example, a pairwise energy function defined by \eqref{pairwise function} such that \(\varphi(x)=O(\exp(-|x|^{-p}))\ (|x|\to 0)\ \) for some \(p>0\), satisfies condition \textbf{(W)} (let \(r(x)=2^{-1}(\log x)^{-1/p}\)). 
\end{example}

The main result in this section is the following.

\begin{theorem}\label{thm1}
Let an energy function \(U\) with a finite range satisfy condition \textbf{(W)}, its local energy function be bounded below, and a single site potential \(u_0\) be nonpositive, integrable, continuous at the origin, and have a minimum value of less than zero there. We assume that a Gibbs point process for \(U\) is stationary and ergodic.
Then the corresponding IDS satisfies that
\begin{equation}\label{thm1 eq}
\log N(\lambda)\sim -\frac{\lambda\log|\lambda|}{u_{0}(0)}\quad(\lambda\downarrow-\infty).
\end{equation}
\end{theorem}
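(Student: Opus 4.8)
\medskip
\noindent\textbf{Proof strategy.} The plan is to trap $\log N(\lambda)$ between $(1-o(1))$ and $(1+o(1))$ times $-\lambda\log|\lambda|/u_0(0)$, a quantity that is negative as $\lambda\downarrow-\infty$. For the upper bound on $N$, let $z>0$ be a constant such that the local energy function is bounded below by $-\log z$. Since $u_0\le0$, enlarging $\mathrm{supp}\,\eta$ only decreases $V_\eta$ pointwise, so by min--max the number of Dirichlet eigenvalues of $H^{D}_{\eta,L}$ not exceeding $\lambda$ is an increasing functional of $\eta$; approximating it by the corresponding counts for the truncated potentials $V_{\eta_{\Lambda'}}$ exhibits it as an increasing limit of bounded $\mathcal F_{\Lambda'}$-measurable increasing functionals, so \eqref{dom eq} and the construction of the IDS give $N(\lambda)\le N_{\pi^z}(\lambda)$ (alternatively one compares the Feynman--Kac/Laplace-functional representations of the two densities of states as in \cite{PF}). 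As $\pi^z$ is a stationary Poisson point process and $u_0$ satisfies the hypotheses of \eqref{Pastur tails}, it follows that
\begin{equation}
\liminf_{\lambda\downarrow-\infty}\frac{\log N(\lambda)}{-\lambda\log|\lambda|/u_0(0)}\ge1,
\end{equation}
granted $N(\lambda)>0$ for $\lambda$ very negative, which the lower bound below supplies.

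\noindent\textbf{Lower bound: the test configuration.} Fix $\veps\in(0,\veps_0]$. By continuity of $u_0$ at $0$ and $u_0(0)<0$, pick $\delta>0$ with $u_0(y)\le(1-\tfrac{\veps}{2(1+\veps)})u_0(0)$ whenever $|y|\le\delta$. Fix a cube $Q$ centred at $0$ whose inradius exceeds $R+1+\sup_{s\ge s_0}r(s)$ (finite since $r(s)\to0$) and $\psi_0\in C_c^\infty(B(0,1))$ with $\int|\psi_0|^2=1$; with $t:=|\lambda|/|u_0(0)|\to\infty$, put $n_t:=\lceil(1+\veps)t\rceil$, $\psi_t:=r(t)^{-d/2}\psi_0(\cdot/r(t))$, and
\begin{equation}
A_t:=\{\eta:\ M_{B(0,r(t))}(\eta)=n_t,\ M_{Q\setminus B(0,r(t))}(\eta)=0\}.
\end{equation}
On $A_t$, for $y\in\mathrm{supp}\,\psi_t=B(0,r(t))$ the $n_t$ points of $\eta$ all lie within $2r(t)\le\delta$ of $y$ (for $\lambda$ negative enough) while every other point of $\eta$ contributes nonpositively, so $V_\eta(y)\le n_t(1-\tfrac{\veps}{2(1+\veps)})u_0(0)\le(1+\tfrac{\veps}{2})t\,u_0(0)=(1+\tfrac{\veps}{2})\lambda$. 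Since $r(t)^{-2}=t^{o(1)}=o(t)$ by \textbf{(W)}, for $\lambda$ negative enough
\begin{equation}
\langle\psi_t,H_\eta\psi_t\rangle=r(t)^{-2}\!\int|\nabla\psi_0|^2+\int V_\eta|\psi_t|^2\le r(t)^{-2}\!\int|\nabla\psi_0|^2+(1+\tfrac{\veps}{2})\lambda\le\lambda,
\end{equation}
so the bottom of the Dirichlet spectrum of $H_\eta$ on $Q$ is at most $\lambda$ on $A_t$. By Proposition \ref{prop} this gives $N(\lambda)\ge c\,P(A_t)$ for a constant $c>0$.

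\noindent\textbf{Lower bound: the probability of $A_t$.} I would estimate $P(A_t)$ through the DLR equation \eqref{DLReq} with $\Lambda=Q$: $P(A_t)=\int P_{Q,\gamma}(A_t)\,P(d\gamma)$. For $\eta\in A_t$ the configuration $\eta_Q$ is supported in $B(0,r(t))$, which by the choice of $Q$ lies at distance $>R$ from $Q^c$; the finite range property (which makes $U$ additive over configurations more than $R$ apart) then forces $U_Q(\eta_Q+\gamma_{Q^c})=U(\eta_{B(0,r(t))})$, and by \textbf{(W)} the latter is at most $o(t\log t)$, uniformly in $\eta\in A_t$ and $\gamma$. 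Bounding $Z_Q(\gamma)$ above by the $\gamma$-independent constant of \eqref{partition upper}, we get $P_{Q,\gamma}(A_t)\ge c_0\,e^{-o(t\log t)}\pi^1(A_t)$ with $c_0>0$, hence $P(A_t)\ge c_0\,e^{-o(t\log t)}\pi^1(A_t)$. Under $\pi^1$ the two occupation numbers defining $A_t$ are independent Poisson variables, so $\pi^1(A_t)=e^{-|Q|}v_t^{\,n_t}/n_t!$ with $v_t:=|B(0,r(t))|$; using $\log v_t=d\log r(t)+O(1)=o(\log t)$ (by \textbf{(W)}), $\log n_t=\log t+O(1)$, and Stirling,
\begin{equation}
\log\pi^1(A_t)=n_t\bigl(\log v_t-\log n_t+1\bigr)+O(\log t)=-(1+\veps)t\log t+o(t\log t).
\end{equation}
Therefore $\log N(\lambda)\ge-(1+\veps)t\log t+o(t\log t)$; since $-t\log t\sim-\lambda\log|\lambda|/u_0(0)<0$, this gives $\limsup_{\lambda\downarrow-\infty}\frac{\log N(\lambda)}{-\lambda\log|\lambda|/u_0(0)}\le1+\veps$, and letting $\veps\downarrow0$ completes the lower bound. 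Together with the first step, $\log N(\lambda)\sim-\lambda\log|\lambda|/u_0(0)$.

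\noindent\textbf{The main obstacle.} The crux is the uniform bound $e^{-U_Q(\eta_Q+\gamma_{Q^c})}\ge e^{-o(t\log t)}$ in the conditioning configuration $\gamma$: this is exactly where the finite-range property (decoupling the inserted cluster in $B(0,r(t))$ from all points outside $Q$) must be combined with condition \textbf{(W)}, which guarantees that a cluster of $\lceil(1+\veps)t\rceil$ points confined to the slowly shrinking ball $B(0,r(t))$ costs only $o(t\log t)$ in self-energy --- negligible against the $(1+\veps)t\log t$ cost of producing such a cluster under $\pi^1$. A secondary technical point is that, $u_0$ not being compactly supported, the eigenvalue count in the domination step (and the relevant Laplace functional) is not $\mathcal F_\Lambda$-measurable and must be reached by the monotone approximation indicated above; one should also verify that the precise form of Proposition \ref{prop} invoked does yield the reduction $N(\lambda)\ge c\,P(A_t)$.
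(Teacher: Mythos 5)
Your proposal is essentially correct and follows the same core strategy as the paper: stochastic domination by the Poisson process $\pi^z$ for the upper bound on $N(\lambda)$, and the DLR equation combined with condition \textbf{(W)}, the uniform partition-function bound \eqref{partition upper}, and Stirling's formula for the lower bound. Two points deserve adjustment.

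First, in the lower bound your invocation of Proposition~\ref{prop} is misaligned with what \eqref{lower} actually says. You derive that the bottom Dirichlet eigenvalue of $H_\eta$ on $Q$ is at most $\lambda$ on $A_t$ via a Rayleigh-quotient calculation, and then cite the proposition for $N(\lambda)\ge c\,P(A_t)$; but \eqref{lower} bounds $\log N$ by $\log P(\sup_{|x|<\delta}V_\cdot(x)\le(1+\veps)\lambda)$, an inclusion in a potential-level event, not an eigenvalue-level event. The Rayleigh-quotient step is therefore both superfluous and improperly cited. The cleaner route, which is what the paper does, is to choose $\delta$ so that the estimate on $u_0$ holds on a ball of \emph{twice} that radius (i.e.\ $u_0(y)\le(1-\tfrac{\veps}{2(1+\veps)})u_0(0)$ for $|y|\le 2\delta$); then for $|x|<\delta$ and all $n_t$ points in $B(0,r(t))\subset B(0,\delta)$ one has $|x-x_j|<2\delta$, which gives $\sup_{|x|<\delta}V_\eta(x)\le(1+\tfrac{\veps}{2})\lambda\le(1+\veps)\lambda$ on $A_t$, so $A_t\subset\{\sup_{|x|<\delta}V_\cdot\le(1+\veps)\lambda\}$ and \eqref{lower} applies directly with $\log N(\lambda)\ge\log P(A_t)$. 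Your DLR/$\mathbf{(W)}$/Stirling estimate of $P(A_t)$ then matches the paper's.

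Second, your primary upper-bound argument compares densities of states directly via $N(\lambda)\le N_{\pi^z}(\lambda)$ by monotonicity of the eigenvalue count, which as you acknowledge requires handling measurability and the unboundedness of the count; the "alternative" you mention in passing (combining domination \eqref{dom eq} with the Laplace-functional bound \eqref{upper}, then substituting $t=\log|\lambda|/u(0)$ and invoking the explicit Poisson computation) is precisely the paper's route and avoids those issues, since \eqref{upper} is already stated as an inequality for the Gibbs process and \eqref{dom eq} is applied only to the exponential functional. I would recommend promoting that alternative to the main argument.
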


The asymptotic behavior \eqref{thm1 eq} coincides with \eqref{Pastur tails}: the asymptotic behavior of the IDS for a Poisson point process.

One of the main tools in this paper is the following, which is proved by the path integral representation of the Laplace transform of the IDS and Dirichlet-Neumann bracketing (see \cite[(9.1) Theorem and (9.2) Theorem]{PF} and \cite{CL} for the proof).

\begin{proposition}\label{prop}
For a stationary and ergodic Gibbs point process with a local energy function bounded below and a bounded integrable single site potential \(u_0\), the following hold:

\begin{itemize}
\item for any \(\veps>0\) and any \(\delta\in(0,1/2)\), there exists \(\lambda_0<0\) such that for any \(\lambda<\lambda_0\),
\begin{equation}\label{lower}
\log N(\lambda) \geq \log P\big(\sup_{|x|<\delta}V_{\cdot}(x)\leq (1+\veps)\lambda\big);
\end{equation}
\item for any \(t>1\) and any \(\lambda\in\mathbb{R}\),
\begin{equation}\label{upper}
\log N(\lambda) \leq \lambda t+ \log\big(\int_{\mathcal{C}}\exp(-t\int_{\mathbb{R}^d}u_0(-x)\eta(dx)) P(d\eta)\big),
\end{equation}
\end{itemize}
where \(P\) is the distribution of the Gibbs point process.
\end{proposition}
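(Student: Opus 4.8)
The plan is to establish the two bounds separately; both reduce, via the Dirichlet--Neumann bracketing description of the IDS, to a spectral estimate on a single box, after which the ergodic theorem for the (stationary, ergodic) point process converts box quantities into the infinite-volume objects. The background facts I take for granted are classical and, under the present hypotheses, available because $V_\eta\in L^p_{\mathrm{loc}}(\mathbb R^d)$ a.s. for every $p\in[1,\infty)$ by \eqref{self-adjointness} and stochastic domination: (i) $N(\lambda)=\lim_{L\to\infty}|\Lambda_L|^{-1}\#\{\text{eigenvalues of }H_{\eta,L}^D\le\lambda\}$ exists a.s., is non-random, and as a Stieltjes measure has Laplace transform $\widetilde N(t):=\int_{\mathbb R}e^{-t\mu}\,dN(\mu)=\lim_{L\to\infty}|\Lambda_L|^{-1}\mathrm{Tr}\,e^{-tH_{\eta,L}^D}$ for $t>0$; (ii) Dirichlet bracketing: decomposing $\Lambda_L$ into disjoint congruent translates of a fixed open cube $Q$ gives $\bigoplus_i H_{Q_i}^D\ge H_{\Lambda_L}^D$ as quadratic forms, hence $\#\{\text{ev of }H_{\Lambda_L}^D\le\lambda\}\ge\sum_i\#\{\text{ev of }H_{Q_i}^D\le\lambda\}$. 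These are exactly \cite[(9.1) Theorem and (9.2) Theorem]{PF} and \cite{CL}.

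\textbf{Lower bound.} Fix $\veps>0$ and $\delta\in(0,1/2)$, and pick a fixed open cube $Q$ with $\overline Q\subset B(0,\delta)$ and $|Q|\le1$ (possible since $|B(0,\delta)|<1$ for $\delta<1/2$). Let $\lambda_1^Q:=\lambda_1(-\Delta_Q^D)=d\pi^2/(\mathrm{side}\,Q)^2>0$, with normalized positive ground state $\psi_0$. The variational principle gives
\[
\lambda_1(H_Q^D)\le\langle\psi_0,H_Q^D\psi_0\rangle=\lambda_1^Q+\int_Q V_\eta(x)|\psi_0(x)|^2\,dx\le\lambda_1^Q+\sup_{x\in Q}V_\eta(x).
\]
Hence on the event $A_\lambda:=\{\sup_{|x|<\delta}V_\cdot(x)\le(1+\veps)\lambda\}$ (which forces $\sup_QV_\eta\le(1+\veps)\lambda$) we get $\lambda_1(H_Q^D)\le\lambda_1^Q+(1+\veps)\lambda\le\lambda$ as soon as $\lambda<\lambda_0:=-\lambda_1^Q/\veps<0$. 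So for $\lambda<\lambda_0$, $A_\lambda\subset\{\#\{\text{ev of }H_Q^D\le\lambda\}\ge1\}$, whence $\mathbb E[\#\{\text{ev of }H_Q^D\le\lambda\}]\ge P(A_\lambda)$. Combining (ii) with the multidimensional ergodic theorem applied to the spatial averages over the translates $Q_i$ yields $N(\lambda)\ge|Q|^{-1}\mathbb E[\#\{\text{ev of }H_Q^D\le\lambda\}]\ge|Q|^{-1}P(A_\lambda)\ge P(A_\lambda)$, using $|Q|\le1$. Taking logarithms gives \eqref{lower}.

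\textbf{Upper bound.} For $t>0$ and $\lambda\in\mathbb R$, Chebyshev gives $N(\lambda)=\int_{(-\infty,\lambda]}dN(\mu)\le e^{t\lambda}\int_{\mathbb R}e^{-t\mu}\,dN(\mu)=e^{t\lambda}\widetilde N(t)$. To bound $\widetilde N(t)$, apply the Golden--Thompson inequality on each box (with $e^{-tH_{\eta,L}^D}=e^{t\Delta_L^D+(-tV_\eta)}$):
\[
\mathrm{Tr}\,e^{-tH_{\eta,L}^D}\le\mathrm{Tr}\bigl(e^{t\Delta_L^D}e^{-tV_\eta}\bigr)=\int_{\Lambda_L}p_t^{\Delta_L^D}(x,x)\,e^{-tV_\eta(x)}\,dx\le(4\pi t)^{-d/2}\int_{\Lambda_L}e^{-tV_\eta(x)}\,dx,
\]
since the free Dirichlet heat kernel satisfies $p_t^{\Delta_L^D}(x,x)\le(4\pi t)^{-d/2}$. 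Dividing by $|\Lambda_L|$ and letting $L\to\infty$, the ergodic theorem gives $\widetilde N(t)\le(4\pi t)^{-d/2}\mathbb E[e^{-tV_\eta(0)}]$, and $V_\eta(0)=\int u_0(-x)\eta(dx)$, so $\mathbb E[e^{-tV_\eta(0)}]=\int_{\mathcal C}\exp(-t\int_{\mathbb R^d}u_0(-x)\eta(dx))\,P(d\eta)$. Therefore
\[
\log N(\lambda)\le\lambda t-\tfrac d2\log(4\pi t)+\log\!\Bigl(\int_{\mathcal C}e^{-t\int u_0(-x)\eta(dx)}\,P(d\eta)\Bigr),
\]
and for $t>1$ one has $(4\pi t)^{-d/2}<1$, i.e. $-\tfrac d2\log(4\pi t)<0$, which may be discarded to obtain \eqref{upper}.

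\textbf{Main obstacle.} The elementary ingredients (variational principle, Dirichlet bracketing on one box, Golden--Thompson, Chebyshev) are routine; the real content is the infinite-volume infrastructure (i)--(ii): under mere stationarity, ergodicity and a lower bound on the local energy, the finite-volume eigenvalue counting functions and traces must converge to the non-random $N(\lambda)$ and $\widetilde N(t)$, and the Laplace-transform / Feynman--Kac identification must hold. This is precisely where \eqref{self-adjointness} (hence stochastic domination by $\pi^z$) enters, placing $V_\eta$ in $L^p_{\mathrm{loc}}$ so that $H_\eta$ is essentially self-adjoint and the bracketing and ergodic-convergence arguments of \cite[Chapter VI]{CL} and \cite{PF} apply. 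In the write-up I would simply invoke those references for (i)--(ii) rather than reproduce them.
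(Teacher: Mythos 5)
Your proof is correct and reconstructs what the paper delegates entirely to \cite{PF} and \cite{CL} without reproducing. For the lower bound \eqref{lower}, you and the cited sources proceed the same way: Dirichlet bracketing on a single small box \(Q\subset B(0,\delta)\) plus the variational principle with the free ground state \(\psi_0\) to localize a low eigenvalue on the event \(\{\sup_{|x|<\delta}V_\cdot(x)\le(1+\veps)\lambda\}\), and then the ergodic theorem over translates of \(Q\); your choice \(|Q|\le 1\) is the small but necessary device that turns the generic bound \(N(\lambda)\ge|Q|^{-1}P(A_\lambda)\) into the honest inequality \eqref{lower} rather than one valid only up to an \(O(1)\) additive constant. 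For the upper bound \eqref{upper}, the route in \cite{PF} derives \(\widetilde N(t)\le(4\pi t)^{-d/2}\,\mathbb E\bigl[e^{-tV_\eta(0)}\bigr]\) from the Feynman--Kac representation of the heat kernel together with Jensen's inequality along the Brownian bridge, whereas you obtain the same estimate from the Golden--Thompson inequality \(\mathrm{Tr}\,e^{-tH_{\eta,L}^D}\le\mathrm{Tr}\bigl(e^{t\Delta_L^D}e^{-tV_\eta}\bigr)\) at finite volume followed by the ergodic limit. That is a genuinely different key lemma, though an equally standard one: Golden--Thompson is purely operator-theoretic and arguably more elementary (no path integrals, no Brownian bridge), while the Feynman--Kac route is what the reference actually uses and is the more common idiom in this literature. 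Both land on the identical Laplace-functional bound, and in both versions the hypothesis that the local energy is bounded below (hence stochastic domination by \(\pi^z\), hence \(V_\eta\in L^p_{\mathrm{loc}}\) and finiteness of the relevant expectations) enters at the same points, as you correctly flag in your closing remark.
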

The integral on the right-hand side of \eqref{upper} is called the \emph{Laplace functional}.

\begin{proof}[Proof of Theorem \ref{thm1}]
Let \(P\) be the distribution of the Gibbs point process.

\emph{(Lower bound)} 
For simplicity, we put \(u(x)=-u_{0}(-x)\) (i.e. \(u\) is nonnegative, and \(V_{\eta}(x)=-\int u(y-x)\eta(dy)\)).
We assume that the energy function \(U\) has a finite range \(R>0\), and the local energy function is bounded below by \(-\log z\) for some \(z>0\). 

Fix nonempty bounded \(\Lambda\in\mathcal{B}(\mathbb{R}^d)\) and \(n\in\mathbb{N}\). Let \(A=\{M_{\Lambda}=n,\ M_{\Lambda_{R}\setminus\Lambda}=0\}\in\mathcal{F}_{\Lambda_R}\), where \(\Lambda_R=\Lambda+B(0,R)\).
For all \(\eta\in A\) and \(P\)-almost all \(\gamma\in\mathcal{C}\), we have
\begin{equation}
\begin{split}
U_{\Lambda_{R}}(\eta_{\Lambda_{R}}+\gamma_{(\Lambda_{R})^c})&=U(\eta_{\Lambda}+\gamma_{\Lambda_{2R}\setminus\Lambda_{R}})-U(\gamma_{\Lambda_{2R}\setminus\Lambda_{R}})\\
&=U(\eta_{\Lambda})\leq \sup_{M_{\mathbb{R}^d}(\eta_{\Lambda})=n}U(\eta_{\Lambda}).
\end{split}
\end{equation}
Hence, from the DLR equation and \eqref{partition upper}, we obtain
\begin{equation}\label{weak low}
\begin{split}
P(M_{\Lambda}=n)&\geq P(A)\\
&\geq \int_{\mathcal{C}}\int_{\mathcal{C}} \frac{1}{Z_{\Lambda_{R}}(\gamma)} 1_{A}(\eta_{\Lambda_{R}})e^{-U_{\Lambda_{R}}(\eta_{\Lambda_{R}}+\gamma_{(\Lambda_{R})^c})}\pi^1(d\eta)P(d\gamma)\\
&\geq e^{-z|\Lambda_{R}|}\frac{|\Lambda|^n}{n!}\exp\big(-\sup_{M_{\mathbb{R}^d}(\eta_{\Lambda})=n}U(\eta_{\Lambda})\big),
\end{split}
\end{equation}
where \(1_{A}\) is the indicator function of \(A\).

Let \(U\) satisfy condition \textbf{(W)} with a constant \(\veps_0>0\) and a function \(r\).
Fix \(\veps\in(0,u(0))\) such that \((1+\veps)/(u(0)-\veps)\leq(1+\veps_0)/u(0)\).
We find \(\delta\in(0,1/2)\) such that \(u(0)-\veps\leq u(x)\) whenever \(|x|\leq 2\delta\), and we have
\begin{equation}
\sup_{|x|<\delta}V_{\eta}(x)\leq -(u(0)-\veps)M_{B(0,\delta)}(\eta).
\end{equation}
From \eqref{lower} and \eqref{weak low}, for all sufficiently small \(\lambda<0\), we get
\begin{equation}
\begin{split}
\log N(\lambda) &\geq \log P\big(-(u(0)-\veps)M_{B(0,\delta)}\leq (1+\veps)\lambda\big)\\
&\geq \log P\big(M_{\Lambda(\lambda)}=n(\lambda) \big)\\
&\geq -\log n(\lambda)! -z|B(0,2R)|\\
&\qquad+n(\lambda)\log|\Lambda(\lambda)|-\sup_{M_{\mathbb{R}^d}(\eta_{\Lambda(\lambda)})=n(\lambda)}U(\eta_{\Lambda(\lambda)}),
\end{split}
\end{equation}
where we put \(\Lambda(\lambda)=B(0,r(|\lambda|/u(0)))\) and \(n(\lambda)=\lceil(1+\veps)|\lambda|/(u(0)-\veps)\rceil\).

From condition \textbf{(W)}, we have
\begin{equation}
n(\lambda)\log|\Lambda(\lambda)|-\sup_{M_{\mathbb{R}^d}(\eta_{\Lambda(\lambda)})=n(\lambda)}U(\eta_{\Lambda(\lambda)})=o(\lambda\log|\lambda|)\quad(\lambda\downarrow-\infty).
\end{equation}
Hence, we obtain
\begin{equation}
\liminf_{\lambda\downarrow-\infty}\frac{\log N(\lambda)}{|\lambda|\log|\lambda|}
\geq\liminf_{\lambda\downarrow-\infty}\frac{-\log n(\lambda)!}{|\lambda|\log|\lambda|}=-\frac{1+\veps}{u(0)-\veps},
\end{equation}
where we use Stirling's formula in the last equation.
This implies that
\begin{equation}
\liminf_{\lambda\downarrow-\infty}\frac{\log N(\lambda)}{|\lambda|\log|\lambda|}\geq\frac{1}{u_{0}(0)}.
\end{equation}

\emph{(Upper bound)} 
From \eqref{dom eq} and \eqref{upper}, we obtain
\begin{equation}
\log N(\lambda)\leq \lambda t + \log\big(\int_{\mathcal{C}}\exp\big(-t\int_{\mathbb{R}^d}u_0(-x)\eta(dx)\big) \pi^z(d\eta)\big).
\end{equation}
We substitute \(t=(\log|\lambda|)/u(0)\), and with the simple calculation of the Laplace functional of the Poisson point process (see \cite[(9.4) Theorem]{PF}), we obtain that
\begin{equation}
\limsup_{\lambda\downarrow-\infty}\frac{\log N(\lambda)}{|\lambda|\log|\lambda|}\leq \frac{1}{u_{0}(0)}.
\end{equation}

\end{proof}


\section{Pairwise interactions}\label{Pairwise sec}
\subsection{Main results}

In this section, we consider a pairwise interaction process corresponding to a pairwise interaction such that the repulsion does not fade as the two points approach each other (cf. Example \ref{weak pair}). For such a pairwise interaction process, the leading term of the asymptotic behavior of the IDS could be affected by the values of the single site potential that are not necessarily the minimum value (cf. \eqref{Pastur tails}). To represent the effect, we define
\begin{equation}
\|u\|_{S}^2=\sup\big\{\sum_{j=1}^{\infty}u(x_j)^2 \mid \{x_j\}_{j=1}^{\infty}\subset\mathbb{R}^d,\ x_i-x_j\in S^c\text{ for any \(i\neq j\)}\big\},
\end{equation}
for every bounded \(S\in\mathcal{B}(\mathbb{R}^d)\) and every measurable function \(u\) on \(\mathbb{R}^d\).
We introduce the following condition associated with the range of the interaction.

\begin{description}
\item[(S)] \(S\in\mathcal{B}(\mathbb{R}^d)\) is nonempty, bounded and symmetric (i.e. \(S=-S\)), and for any \(\alpha\in(0,1)\), 
\begin{equation}
\alpha\overline{S}\subset \mathrm{Int}\,S,
\end{equation}
where \(\mathrm{Int}\,S\) is the interior of \(S\), and for \(a\in\mathbb{R}\), \(aS\) denotes the set \(\{a x\mid x\in S\}\).
\end{description}
We remark that \(S\in\mathcal{B}(\mathbb{R}^d)\) satisfying condition \textbf{(S)} is a bounded star domain (i.e. \(\alpha S\subset S\) for any \(\alpha\in(0,1)\)) including the origin in its interior; however, the converse is false.

The following theorem gives the upper and lower estimates of the asymptotic behavior of the IDS for a pairwise interaction process.

\begin{theorem}\label{strong pair}
Let \(S\in\mathcal{B}(\mathbb{R}^d)\) satisfy condition \textbf{(S)}, and \(u_0\) be a continuous integrable nonpositive single site potential satisfying \(0<\|u_0\|_{S}^{2}<\infty\). Moreover, we assume that a pairwise interaction process for an energy function \(U\) defined by \eqref{pairwise function} having a finite range is stationary and ergodic. Then, we have the following:
\begin{description}
\item[(a)] If \(\varphi\) is upper semi-continuous at the origin, \(\varphi(0)>0\), and \(\varphi(x)=0\) whenever \(x\in S^c\), then it holds that
\begin{equation}\label{strong pair a}
\liminf_{\lambda\downarrow-\infty}\lambda^{-2}\log N(\lambda)\geq -\frac{\varphi(0)}{2\|u_0\|_{S}^2};
\end{equation}
\end{description}
\begin{description}
\item[(b)] If \(u_0\) has a compact support, and there exists a positive constant \(a\) such that \(\varphi(x)\geq a\) whenever \(x\in S\), then it holds that
\begin{equation}\label{strong pair b}
\limsup_{\lambda\downarrow-\infty}\lambda^{-2}\log N(\lambda)\leq -\frac{a}{2\|u_0\|_{S}^2}.
\end{equation}
\end{description}
\end{theorem}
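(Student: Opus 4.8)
The plan is to prove parts (a) and (b) separately, using Proposition \ref{prop} as the engine in both directions, exactly as in the Poisson case but now bookkeeping the interaction energy $U$.

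For the lower bound (a), I would start from \eqref{lower}: for $\veps>0$ and $\delta\in(0,1/2)$, $\log N(\lambda)\geq \log P(\sup_{|x|<\delta}V_\cdot(x)\leq(1+\veps)\lambda)$ for $\lambda$ sufficiently negative. The idea is to force a configuration in which roughly $n\approx n(\lambda)$ points cluster near the origin in such a way that the single site potentials nearly realize $\|u_0\|_S^2$. Concretely, fix a near-optimizing sequence $\{y_1,\dots,y_m\}$ with $y_i-y_j\in S^c$ and $\sum u(y_j)^2$ close to $\|u_0\|_S^2$ (with $u(x)=-u_0(-x)$); then place, for each $j$, about $k_j$ points in a tiny ball $B(y_j,\rho)$, where the $k_j$ are chosen so that $\sum_j k_j u(y_j)\geq (1+\veps)|\lambda|$ at minimal ``cost.'' The cost has two parts: the Poissonian entropy cost $-\sum_j\log k_j!$ coming from the $\pi^1$-density (as in \eqref{weak low}), and the interaction cost $U$ of this configuration. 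Because $\varphi$ is upper semi-continuous at $0$ with $\varphi(0)>0$ and vanishes off $S$, by condition \textbf{(S)} we may shrink $\rho$ so that pairs within a single cluster contribute at most $(\varphi(0)+\veps)$ each, pairs across clusters contribute $0$ (since $y_i-y_j$ is in the open set $\mathrm{Int}\,S^c$... — more carefully, since $\alpha\overline S\subset\mathrm{Int}\,S$ forces $\varphi(y_i-y_j)=0$ robustly under small perturbations), so $U\leq \frac{\varphi(0)+\veps}{2}\sum_j k_j^2$. Optimizing $\sum k_j^2$ subject to $\sum k_j u(y_j)\approx(1+\veps)|\lambda|$ by Lagrange gives $k_j\propto u(y_j)$ and $\sum k_j^2\approx (1+\veps)^2\lambda^2/\sum u(y_j)^2 \approx (1+\veps)^2\lambda^2/\|u_0\|_S^2$; the entropy term $\sum\log k_j!=o(\lambda^2)$ since each $k_j=O(|\lambda|)$ and there are $O(1)$ of them. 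Combining via a DLR lower bound as in \eqref{weak low} (with $\Lambda$ the union of the small balls, bounded independently of $\lambda$), and letting $\veps\downarrow0$, yields \eqref{strong pair a}.

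For the upper bound (b), I would use \eqref{upper}: $\log N(\lambda)\leq \lambda t+\log\int\exp(-t\int u_0(-x)\eta(dx))\,P(d\eta)$ for any $t>1$. Here stochastic domination \eqref{dom eq} is the wrong direction (it controls $\exp$ of an increasing functional from above by the Poisson value, which would only reprove the Pastur bound), so instead the key new input is Proposition \ref{upper lap} — the advertised upper estimate of the Laplace functional that exploits the repulsion $\varphi(x)\geq a$ on $S$. Morally, on the event that $\int u(x)\eta(dx)$ is large (say $\approx s$), the points carrying this mass must, because no two within $S$ come for free (energy cost $\geq a$ per such pair), behave like an $S$-separated family, so their $u$-values sum to at most $\approx$ something governed by $\|u_0\|_S$, while producing a total energy penalty $\gtrsim a s^2/(2\|u_0\|_S^2)$ in the exponent $e^{-U}$. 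Quantitatively, I expect Proposition \ref{upper lap} to give $\int e^{t\int u\,d\eta}P(d\eta)\leq \exp(\frac{t^2\|u_0\|_S^2}{2a}(1+o(1)))$ as $t\to\infty$ (plus lower-order terms from the compact support of $u_0$ and the finite range of $\varphi$, which make the cross-interactions and boundary effects negligible). Plugging this in, $\log N(\lambda)\leq -|\lambda|t+\frac{t^2\|u_0\|_S^2}{2a}(1+o(1))$, and optimizing over $t$ (take $t\approx a|\lambda|/\|u_0\|_S^2$) gives $\log N(\lambda)\leq -\frac{a\lambda^2}{2\|u_0\|_S^2}(1+o(1))$, which is \eqref{strong pair b}.

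The main obstacle is the upper bound, specifically establishing the sharp constant in the Laplace functional estimate (Proposition \ref{upper lap}). The difficulty is that a configuration can exploit the gaps of $S$: points whose differences lie in $S^c$ pay no interaction, so one must show that the only way to make $\int u\,d\eta$ of order $s$ without paying at least $\sim as^2/(2\|u_0\|_S^2)$ in energy is essentially forced, and this requires a clean combinatorial/variational lemma relating $\big(\sum_j u(x_j)\big)^2$ to $\sum_{i\neq j}\mathbf 1[x_i-x_j\in S]$ and $\sum_j u(x_j)^2$ — precisely where the definition of $\|u_0\|_S^2$ and the star-shapedness in \textbf{(S)} (which guarantees a stable separation structure) enter. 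The compact support of $u_0$ in (b) is what makes this combinatorics finite-dimensional and controllable; handling the boundary/cross terms so they contribute only $o(\lambda^2)$ is the remaining technical point, but routine given the finite range of $\varphi$ and integrability of $u_0$. In part (a), the analogous but easier obstacle is checking that the cluster placement genuinely achieves the interaction bound $U\leq\frac{\varphi(0)+\veps}{2}\sum k_j^2$ uniformly as $\rho\to0$, which is where upper semi-continuity of $\varphi$ at $0$ and $\alpha\overline S\subset\mathrm{Int}\,S$ are used.
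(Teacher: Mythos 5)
Your plan matches the paper's for both parts, and your sketch of (a) is essentially the paper's argument: pick near-optimizers $x_1,\ldots,x_k$ with $x_i-x_j\in(\overline S)^c$ (the open strengthening of $S^c$, which condition \textbf{(S)} plus continuity of $u$ makes available and which you correctly flag as the subtlety), cluster $n_j$ points in small balls around $x_j$, bound the intra-cluster energy by $(\varphi(0)+\veps)n_j^2/2$ via upper semi-continuity and inter-cluster energy by $0$, use a DLR lower bound with $Z_\Gamma(\gamma)\leq 1$, and then the choice $n_j\propto u(x_j)$ from your Lagrange optimization is exactly the paper's $n_j=\lceil(1+\veps)c_j|\lambda|/(u(x_j)-\veps)\rceil$ with $c_j=u(x_j)^2/\sum_i u(x_i)^2$.

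For (b), you correctly rule out stochastic domination and identify the Laplace functional estimate as the key, and the final optimization over $t$ is as you say. But there is a genuine gap: Proposition \ref{upper lap} is stated only for step functions $v=\sum v_j 1_{\Lambda_j}$ where each $\Lambda_j$ has $S$-bounded diameter, and the paper applies it to the piecewise-constant dominator $u_n$ on a mesh-$1/n$ grid; what comes out is the constant $\|u_n\|^2_{S_{2\sqrt d/n}}$ (the seminorm with $S$ shrunk by $2\sqrt d/n$), not $\|u_0\|^2_S$. Passing from one to the other is Lemma \ref{upper2}, which needs the rescaling built into condition \textbf{(S)}; this is not a matter of "cross-interactions and boundary effects being negligible" as you suggest, but an actual convergence statement for the $\|\cdot\|_S$-type seminorms under shrinking of $S$. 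Beyond that, the proof of Proposition \ref{upper lap} itself — the DLR/\eqref{partition lower} upper bound on $P(M_{\Lambda_j}=n_j)$ (Lemma \ref{upper prob}) and the change-of-variables summation estimate eliminating the $S$-interacting pairs one at a time (Lemma \ref{int lem}) — carries the real technical content of (b). You correctly diagnose it as the main obstacle and even guess roughly what the combinatorial/variational lemma should look like, but the proposal does not attempt it, so (b) is established only modulo Proposition \ref{upper lap} and Lemma \ref{upper2}.
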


From Theorem \ref{strong pair}, we determine the leading terms of the asymptotic behaviors of the IDS for some pairwise interaction processes:  

\begin{corollary}\label{Strauss}
Let a single site potential \(u_0\) be a nonpositive continuous function that is not identically zero, with compact support, and \(\varphi\) be continuous at the origin and satisfy that \(\varphi(0)>0\), \(\varphi(x)\geq\varphi(0)\) for any \(x\in S\), and \(\varphi(x)=0\) for any \(x\in S^c\), where \(S\in\mathcal{B}(\mathbb{R}^d)\) satisfies condition \textbf{(S)}. We assume that a pairwise interaction process for a pairwise energy function defined by \eqref{pairwise function} is stationary and ergodic.
Then the corresponding IDS satisfies that
\begin{equation}
\log N(\lambda)\sim-\frac{\varphi(0)}{2\|u_0\|_{S}^2}\lambda^2\quad(\lambda\downarrow-\infty).
\end{equation}
\end{corollary}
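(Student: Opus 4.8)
The plan is to deduce the corollary directly from Theorem \ref{strong pair} by checking that the hypotheses in part \textbf{(a)} and part \textbf{(b)} are both met under the assumptions of the corollary, so that the liminf lower bound and the limsup upper bound coincide. First I would verify that the single site potential $u_0$ satisfies the standing requirements of Theorem \ref{strong pair}: it is continuous, nonpositive, and has compact support (hence integrable), and since it is not identically zero while $S$ contains the origin in its interior, one can place a single point so that $u_0$ is nonzero there, giving $\|u_0\|_S^2>0$; finiteness of $\|u_0\|_S^2$ follows because $u_0$ is bounded (continuous with compact support) and condition \textbf{(S)} forces a uniform lower bound on the pairwise distances of any admissible configuration $\{x_j\}$ with $x_i-x_j\in S^c$, so only finitely many of the $x_j$ can lie in the support of $u_0$ and the sum $\sum_j u_0(x_j)^2$ is uniformly bounded. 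Thus $0<\|u_0\|_S^2<\infty$ holds.

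Next I would check the interaction hypotheses. For part \textbf{(a)} we need $\varphi$ upper semi-continuous at the origin (it is continuous at the origin by assumption, hence in particular upper semi-continuous there), $\varphi(0)>0$ (assumed), and $\varphi(x)=0$ for $x\in S^c$ (assumed); also the pairwise energy function has finite range because $\varphi$ has compact support (and, more precisely, $\varphi$ vanishes off $S$ which is bounded, so the support of $\varphi$ lies in a ball $B(0,R)$ for some $R$). Hence \eqref{strong pair a} gives
\begin{equation}
\liminf_{\lambda\downarrow-\infty}\lambda^{-2}\log N(\lambda)\geq -\frac{\varphi(0)}{2\|u_0\|_{S}^2}.
\end{equation}
For part \textbf{(b)} we need $u_0$ to have compact support (assumed) and a positive constant $a$ with $\varphi(x)\geq a$ for all $x\in S$; here I would take $a=\varphi(0)$, which is positive by hypothesis, and the assumption $\varphi(x)\geq\varphi(0)$ for all $x\in S$ is exactly the required bound. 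Hence \eqref{strong pair b} gives
\begin{equation}
\limsup_{\lambda\downarrow-\infty}\lambda^{-2}\log N(\lambda)\leq -\frac{\varphi(0)}{2\|u_0\|_{S}^2}.
\end{equation}

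Combining the two displayed inequalities, the limit of $\lambda^{-2}\log N(\lambda)$ as $\lambda\downarrow-\infty$ exists and equals $-\varphi(0)/(2\|u_0\|_S^2)$, which is precisely the assertion $\log N(\lambda)\sim -\frac{\varphi(0)}{2\|u_0\|_S^2}\lambda^2$. The only genuinely non-bookkeeping point is the verification that $\|u_0\|_S^2$ is finite and positive; everything else is a matter of matching hypotheses. I expect the finiteness argument — extracting from condition \textbf{(S)} that admissible configurations are uniformly separated, so that the support of $u_0$ meets at most a bounded number of them — to be the only place where a short argument rather than a citation is needed, and even that is routine given that $0\in\mathrm{Int}\,S$ and $u_0$ is continuous with compact support.
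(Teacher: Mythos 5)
Your proposal is correct and follows exactly the route the paper intends (the paper gives no explicit proof of Corollary~\ref{Strauss}, treating it as an immediate consequence of Theorem~\ref{strong pair}): take \(a=\varphi(0)\) in part~\textbf{(b)}, note continuity at the origin gives upper semi-continuity for part~\textbf{(a)}, and match the two bounds. Your verification that \(0<\|u_0\|_S^2<\infty\) — using \(0\in\mathrm{Int}\,S\) (from condition~\textbf{(S)}) to get a uniform separation of admissible configurations, and compact support plus boundedness of \(u_0\) to cap the sum — is the one non-trivial check and is handled correctly.
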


\begin{remark}
When \(\varphi\) is not upper semi-continuous at the origin and is bounded on a neighborhood there, we can apply Theorem \ref{strong pair} (a) by replacing the value of \(\varphi(0)\) so that \(\varphi\) is upper semi-continuous at the origin. Similarly, Corollary \ref{Strauss} could be applied by replacing \(\varphi(0)\) so that \(\varphi\) is continuous at the origin. These are justified by the fact that the pairwise interaction processes are independent of the value of \(\varphi(0)\). 
\end{remark}

\begin{remark}
Corollary \ref{Strauss} includes the \emph{Strauss processes} (i.e. \(\varphi=a1_{B(0,R)}\) for some \(a,R\in(0,\infty)\)) introduced in \cite{Strauss}. 
In this remark, we fix a single site potential satisfying the condition of Corollary \ref{Strauss}. 
Theorem \ref{strong pair} implies that for a pairwise interaction process such that \(c_1<\varphi<c_2\) on a neighborhood of the origin for some \(c_1,c_2\in(0,\infty)\), the corresponding IDS satisfies that
\begin{equation}\label{loglog}
\log|\log N(\lambda)|\sim 2\log|\lambda|\quad(\lambda\downarrow-\infty).
\end{equation} 
For a \emph{hardcore process} (i.e. \(\varphi=+\infty\) on a neighborhood of the origin), the corresponding IDS does not satisfy \eqref{loglog}. In this case, there is a constant \(\beta\) such that \(V_{\eta}>\beta\) for \(P\)-almost all \(\eta\in\mathcal{C}\), where \(P\) is the distribution of the hardcore process, and hence \(N(\lambda)=0\) whenever \(\lambda\) is less than \(\beta\). 
\end{remark}


\subsection{Proof of Theorem \ref{strong pair} (a)}

To apply \eqref{lower}, we use \eqref{lower prob} unlike either case of a Gibbs point process in Section \ref{Weak sec} or a Poisson point process.
Let \(P\) be the distribution of the pairwise interaction process in Theorem \ref{strong pair} (a).

\begin{proof}[Proof of Theorem \ref{strong pair} (a)]
For simplicity, we put \(u(x)=-u_{0}(-x)\).
We fix \(\veps_0>0\). From the continuity of \(u\) and condition \textbf{(S)}, we can find finite points \(x_1,\ldots x_k\in\mathbb{R}^d\) such that \(u(x_j)\) is positive, \(x_i-x_j\in(\overline{S})^c\ (i\neq j)\), and
\begin{equation}\label{epsilon0}
\|u\|_{S}^2-\veps_0\leq\sum_{j=1}^{k}u(x_j)^2,
\end{equation} 
where \(\overline{S}\) denotes the closure of \(S\).
By continuity of \(u\) and upper semi-continuity at the origin of \(\varphi\), for any \(\veps\in(0,\min_{j} u(x_j))\), there exists \(\delta\in(0,1/2)\) such that:
\begin{itemize}
\item \(x-y\in S^c\) for any \(x\in B(x_i,\delta)\) and any \(y\in B(x_j,\delta)\quad(i\neq j)\);
\item \(u(x)+\veps> u(x_j)\) whenever \(x\in B(x_j,2\delta) \quad(j=1,\ldots,k)\);
\item \(\varphi(x)<\varphi(0)+\veps\) whenever \(x\in B(0,2\delta)\). 
\end{itemize}

We put \(\Lambda_j=B(x_j,\delta)\), \(\Gamma=\bigcup_{j=1}^{k}(\Lambda_j+S)\), and \(\Gamma_0=\Gamma\setminus\bigcup_{j=1}^{k}\Lambda_j\). We fix \(n_1,\ldots,n_k\in\mathbb{N}\).
Let \(A\) denote the event \(\{M_{\Lambda_1}=n_1,\ldots,\ M_{\Lambda_k}=n_k,\ M_{\Gamma_0}=0\}\in\mathcal{F}_{\Gamma}\). Since for all \(\gamma\in\mathcal{C}\) and all \(\eta\in A\), we have \(U_{\Gamma}(\eta_{\Gamma}+\gamma_{\Gamma^c})=U(\eta_{\Gamma})\), from the DLR equation, we obtain
\begin{equation}
\begin{split}
P\big(M_{\Lambda_j}=n_j\, ; \, j=1,\ldots,k\big)&\geq P(A)\geq \int_{\mathcal{C}}1_{A}(\eta_{\Gamma})e^{-U(\eta_{\Gamma})}\pi^1(d\eta)\\
&\geq \prod_{j=1}^{k}\frac{1}{n_j!}|\Lambda_j|^{n_j}e^{-(\varphi(0)+\veps)n_j^2/2}e^{-|\Gamma|},
\end{split}
\end{equation}
where we use the fact that \(Z_{\Gamma}(\gamma)\leq1\) (see \eqref{partition upper}).

This implies that there exists \(n_0\in\mathbb{N}\) such that for any \(n_1,\ldots,n_k\geq n_0\),
\begin{equation}\label{lower prob}
\log P\big(M_{\Lambda_j}=n_j\, ; \, j=1,\ldots,k\big)\geq -(1+\veps)\frac{\varphi(0)+\veps}{2}\sum_{j=1}^{k}n_j^2.
\end{equation}
We note that
\begin{equation}
\sup_{|x|<\delta}V_{\eta}(x)\leq -\sum_{j=1}^{k}(u(x_j)-\veps)M_{\Lambda_j}(\eta),
\end{equation}
and from \eqref{lower} and \eqref{lower prob}, for all sufficiently small \(\lambda<0\), we obtain
\begin{equation}\label{pairwise leading}
\begin{split}
\log N(\lambda)&\geq\log P\big(-\sum_{j=1}^{k}(u(x_j)-\veps)M_{\Lambda_j}\leq(1+\veps)\lambda\big)\\
&\geq\log P\Big(M_{\Lambda_j}= \Big\lceil\frac{(1+\veps)c_j|\lambda|}{u(x_j)-\veps}\Big\rceil\,;\, j=1,\ldots,k\Big)\\
&\geq-(1+\veps)^4 \frac{\varphi(0)+\veps}{2}\sum_{j=1}^{k}\big(\frac{c_j}{u(x_j)-\veps}\big)^2\lambda^2,
\end{split}
\end{equation}
where \(c_j=u(x_j)^2/\sum_{j=1}^{k}u(x_j)^2\). 

From \eqref{epsilon0} and \eqref{pairwise leading}, we obtain \eqref{strong pair a}.
\end{proof}


\subsection{Proof of Theorem \ref{strong pair} (b)}

For the proof, we set the simple functions that approximate \(u\), where \(u(x)=-u_0(-x)\).
For every \(n\in\mathbb{N}\), we put
\begin{equation}\label{u_n}
u_{n}(x)=\sum_{\bm{j}\in\mathbb{Z}^d}u_{n,\bm{j}}1_{\Lambda_{n,\bm{j}}}(x),
\end{equation}
where \(u_{n,\bm{j}}=\sup_{y\in\Lambda_{n,\bm{j}}}u(y)\), and \(\Lambda_{n,\bm{j}}\) denotes the box \(\{x+\bm{j}/n\mid x\in(0,1/n]^d\}\). 

\begin{lemma}\label{upper2}
Let \(S\in\mathcal{B}(\mathbb{R}^d)\) satisfy condition \textbf{(S)}. Then for any \(b>0\), it holds that
\begin{equation}
\lim_{n\to\infty}\|u_n\|^2_{S_{b/n}}=\|u\|^2_{S},
\end{equation}
where \(S_{b/n}\) denotes the set \((S^c+B(0,b/n))^c\).
\end{lemma}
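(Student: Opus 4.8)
The plan is to prove the two inequalities $\liminf_{n\to\infty}\|u_n\|^2_{S_{b/n}}\ge\|u\|^2_S$ and $\limsup_{n\to\infty}\|u_n\|^2_{S_{b/n}}\le\|u\|^2_S$ separately. The first (lower) bound is the easier of the two. Given any admissible configuration $\{x_j\}_{j=1}^\infty$ for $\|u\|^2_S$, i.e.\ $x_i-x_j\in S^c$ for $i\ne j$, I would like to produce an admissible configuration for $\|u_n\|^2_{S_{b/n}}$ achieving a nearly-as-large sum. Since $u_n\ge u$ pointwise by construction, it suffices to check that $\{x_j\}$ is itself admissible for the $S_{b/n}$-constraint \emph{for large $n$} — but this is false in general (points separated by exactly the boundary of $S^c$ may fail after shrinking). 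Instead I would first pass to a finite truncation $x_1,\dots,x_k$ with $\sum_{j\le k}u(x_j)^2\ge\|u\|^2_S-\veps$, then rescale: replace $x_j$ by $\alpha x_j$ for $\alpha<1$ close to $1$; by condition \textbf{(S)}, if $x_i-x_j\in S^c$ (equivalently $\notin S$, and we may assume $\notin \overline S$ after a further tiny rescaling using that $\overline{S^c}\cap$ a large ball is compact) then $\alpha(x_i-x_j)$ stays at positive distance from $\overline S$, hence lies in $S_{b/n}^c$ for $n$ large. Continuity of $u$ makes $\sum u(\alpha x_j)^2$ close to $\sum u(x_j)^2$. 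Letting $n\to\infty$ then $\alpha\uparrow1$ then $\veps\downarrow0$ gives the lower bound.

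For the upper bound I would argue by contradiction or via a direct extraction. Suppose along a subsequence $\|u_n\|^2_{S_{b/n}}\ge\|u\|^2_S+2\veps$. For each such $n$ pick a finite admissible configuration (for the $S_{b/n}$-constraint, which is \emph{more} restrictive than the $S$-constraint since $S_{b/n}\supset S$, wait — check: $S_{b/n}=(S^c+B(0,b/n))^c\subset S$, so $S_{b/n}^c\supset S^c$, meaning the constraint $x_i-x_j\in S_{b/n}^c$ is \emph{weaker}, so there is genuinely something to prove) with $\sum_{j=1}^{k_n}u_{n,\bm j(x_j)}^2\ge\|u\|^2_S+\veps$, where each $x_j$ sits in a cube $\Lambda_{n,\bm j}$ of side $1/n$ on which $u_n$ takes the value $u_{n,\bm j}=\sup_{\Lambda_{n,\bm j}}u$. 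Replace each $x_j$ by a near-maximizer $y_j\in\overline{\Lambda_{n,\bm j}}$ with $u(y_j)\ge u_{n,\bm j}-\veps/k_n$ (or handle the $\sup$ via uniform continuity of $u$ on a compact set containing all relevant cubes — this is where the compact support of $u_0$, hence of $u$, is essential, bounding $k_n$ uniformly and confining everything to a fixed compact set). The $y_j$ are within $\sqrt d/n$ of the $x_j$; I then need to show the $y_j$ can be perturbed to satisfy the genuine $S$-constraint $y_i-y_j\in S^c$ while keeping $\sum u(y_j)^2\ge\|u\|^2_S+\veps/2$, which contradicts the definition of $\|u\|^2_S$.

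The main obstacle — and the step deserving the most care — is precisely this last perturbation: converting a configuration that is $(b/n)$-almost-$S$-separated into one that is honestly $S^c$-separated, without losing more than $o(1)$ in the objective. The natural device is again the scaling in condition \textbf{(S)}: apply the dilation $x\mapsto(1+c/n)x$ to the whole configuration for a suitable constant $c=c(b,S)$. If $y_i-y_j\in S_{b/n}^c$, i.e.\ $\mathrm{dist}(y_i-y_j,\,S)\ge$ something comparable to $b/n$ on the relevant compact region (using that $S_{b/n}^c = S^c + B(0,b/n)$'s complement structure and that all points lie in a fixed ball of radius $\rho$), then $(1+c/n)(y_i-y_j)\notin S$ for $c$ large enough depending only on $\rho$, $b$, and the star-shapedness modulus of $S$; here the hypothesis $\alpha\overline S\subset\mathrm{Int}\,S$ for all $\alpha\in(0,1)$ quantifies how far $S$ retreats from its boundary under contraction, equivalently how far a point just outside $S_{b/n}$ moves outside $S$ under the reciprocal dilation. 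Continuity of $u$ on the compact set (enlarged by the dilation) controls the change $\sum u((1+c/n)y_j)^2 - \sum u(y_j)^2 \to 0$. Assembling these estimates yields the contradiction and completes the proof.
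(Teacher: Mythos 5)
Your strategy is the paper's: the lower bound \(\|u_n\|^2_{S_{b/n}}\geq\|u\|^2_S\) is essentially immediate, and the upper bound comes from dilating a near-optimal \(S_{b/n}^c\)-separated configuration outward by a factor slightly above \(1\) to achieve genuine \(S^c\)-separation, then invoking uniform continuity and compact support of \(u\). Two problems, one substantive. First, a direction confusion: since \(S_{b/n}^c=S^c+B(0,b/n)\supset S^c\), any \(S^c\)-separated configuration is already \(S_{b/n}^c\)-separated, so together with \(u_n\geq u\) the lower bound holds for every \(n\) with no rescaling at all — your truncate-and-rescale workaround and the worry about ``shrinking'' are based on a misread of the inclusion. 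Relatedly, your paraphrase ``\(y_i-y_j\in S_{b/n}^c\), i.e.\ \(\mathrm{dist}(y_i-y_j,S)\geq\) comparable to \(b/n\)'' is backwards: membership in \(S^c+B(0,b/n)\) means \(\mathrm{dist}(y_i-y_j,S^c)\leq b/n\), so the difference may lie \emph{inside} \(S\), just near \(\partial S\), which is precisely why a dilation is needed at all.

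The substantive gap is the dilation rate. You propose \(x\mapsto(1+c/n)x\) with \(c=c(b,S)\) a constant, which implicitly assumes the modulus \(\alpha\mapsto\mathrm{dist}(S^c,\alpha S)\) is Lipschitz near \(\alpha=1\). Condition \textbf{(S)} only yields \(\mathrm{dist}(S^c,\alpha S)>0\) for each \(\alpha\in(0,1)\); it gives no rate. For star domains with cusps — e.g.\ \(S\subset\mathbb{R}^2\) bounded near \(\theta=0\) by \(r=1-c_0\sqrt{|\theta|}\), for which \(\mathrm{dist}(S^c,\alpha S)\lesssim(1-\alpha)^2\) — a constant-speed dilation fails: one needs \(\beta_n-1\gtrsim n^{-1/2}\), and \(1+c/n\) is insufficient for any fixed \(c\). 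The paper avoids this by choosing a sequence \(\beta_n\downarrow1\) defined only by the requirement \(\mathrm{dist}(S^c,\beta_n^{-1}S)>b/n\), equivalently \(S\subset\beta_n S_{b/n}\), with no commitment to a rate; since \(\beta_n\to1\) in any case, the displacement \((\beta_n-1)L\) is still \(o(1)\) and uniform continuity (plus uniform convergence \(u_n\to u\)) still applies. Replacing your \(1+c/n\) by such a \(\beta_n\) closes the gap and recovers the paper's argument.
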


\begin{proof}
We note that \(u\) is a nonnegative continuous function with compact support.
Fix a constant \(b>0\). 
It is easy to see that for all sufficiently large \(n\in\mathbb{N}\),
\begin{equation}\label{upper2 eq1}
\|u_n\|_{S_{b/n}}^2\geq\|u\|_{S}^2.
\end{equation}
Since \(\mathrm{dist}(S^c,\alpha S)>0\) for any \(\alpha\in(0,1)\), we can find a monotonically non-increasing sequence \((\beta_n)_{n=n_0}^{\infty}\) \((n_0\geq 1)\) convergent to one, such that
\begin{equation}
\mathrm{dist}(S^c,\frac{1}{\beta_n}S)>b/n\quad(n\geq n_0),
\end{equation}
where for every \(\Lambda,\Gamma\in\mathcal{B}(\mathbb{R}^d)\), \(\mathrm{dist}(\Lambda,\Gamma)\) means \(\inf\{|x-y|\mid x\in\Lambda,\ y\in\Gamma\}\).
This implies that
\begin{equation}\label{S eq}
S\subset \beta_n S_{b/n}\quad(n\geq n_0).
\end{equation}
We fix \(\veps>0\). We find \(L>0\) such that \(\mathrm{supp}\,u_n\subset B(0,L)\) for all \(n\in\mathbb{N}\).
For each \(n\geq n_0\), we note that \(\|u_n\|_{S_{b/n}}<\infty\), and choose finite points \(x_{n,1},\ldots x_{n,k_n}\in\mathbb{R}^d\) such that \(|x_{n,j}|\leq L\), \(x_{n,i}-x_{n,j}\in (S_{b/n})^c\ (i\neq j)\), and
\begin{equation}\label{u_n epsilon}
\|u_n\|^2_{S_{b/n}}-\veps\leq\sum_{j=1}^{k_n}u_n(x_{n,j})^2.
\end{equation}
We can find a constant \(K>0\) independent of \(\veps\) such that \(k_n\leq K\) for all \(n\geq n_0\).
We put \(y_{n,j}=\beta_{n} x_{n,j}\). From \eqref{S eq}, we get 
\begin{equation}\label{x-y}
y_{n,i}-y_{n,j}\in \beta_n (S_{b/n})^c\subset S^c\quad(i\neq j),\quad |x_{n,j}-y_{n,j}|\leq (\beta_n -1)L.
\end{equation}
Since \(u\) is uniformly continuous, and \(u_n\) is uniformly convergent to \(u\), there exists \(\delta>0\) such that for all sufficiently large \(n\in\mathbb{N}\), \(|u_n(x)-u(y)|<\veps\) whenever \(|x-y|<\delta\).
Hence, from \eqref{u_n epsilon} and \eqref{x-y}, for all sufficiently large \(n\in\mathbb{N}\), we obtain
\begin{equation}\label{upper2 eq2}
\begin{split}
\|u_n\|_{S_{b/n}}^2-\veps &\leq\sum_{j=1}^{k_n}u(y_{n,j})^2+2\veps mK+\veps^2 K\\
&\leq\|u\|_{S}^2 +2\veps mK+\veps^2 K,
\end{split}
\end{equation}
where \(m\) is the maximum value of \(u\).
The lemma is proved by combining this and \eqref{upper2 eq1}.
\end{proof}
 
Let \(P\) be the distribution of the Gibbs point process in Theorem \ref{strong pair} (b).
The key to the proof of Theorem \ref{strong pair} (b) is the next proposition: the upper estimate of the Laplace functional as \(t\to\infty\).

\begin{proposition}\label{upper lap}
We put
\begin{equation}
v(x)=\sum_{j=1}^{k}v_j 1_{\Lambda_j}(x),
\end{equation}
where \(v_1,\ldots,v_k>0,\ \Lambda_1,\ldots,\Lambda_k\in\mathcal{B}(\mathbb{R}^d)\) are disjoint bounded sets with positive Lebesgue measure such that for each \(j=1,\ldots,k\), \(x-y\in S\) whenever \(x,y\in\Lambda_j\).
Then we have
\begin{equation}\label{upper lap eq}
\limsup_{t\to\infty}t^{-2} \log\big(\int_{\mathcal{C}}\exp\big(t\int_{\mathbb{R}^d}v(x)\eta(dx)\big)P(d\eta)\big)\leq \frac{1}{2a}\max_{J\in K}\sum_{j\in J}{v_j}^2,
\end{equation}
where \(K\) denotes the set
\begin{equation}
\begin{split}
\{J\subset\{1,\ldots,k\}\mid &\text{for all \(i\neq j\in J\),}\\
&\quad\text{there are \(x\in\Lambda_{i}\) and \(y\in\Lambda_{j}\) such that \(x-y\in S^c\)}\}.
\end{split}
\end{equation}
\end{proposition}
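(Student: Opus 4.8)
The plan is to bound the Laplace functional by splitting the configuration space according to how the points distribute among the cells $\Lambda_1,\ldots,\Lambda_k$, and then to use the stochastic structure coming from the pairwise repulsion to control each piece. Write $N_j = M_{\Lambda_j}(\eta)$, so that $\int v\,d\eta = \sum_j v_j N_j$. The integral in \eqref{upper lap eq} is then $\int_{\mathcal C}\exp\big(t\sum_j v_j N_j\big)\,P(d\eta)$. The first step is to use the DLR equation to condition on the configuration outside $\Gamma := \bigcup_j(\Lambda_j + B(0,R))$ (with $R$ the finite range) and reduce to estimating, uniformly in the boundary condition $\gamma$,
\begin{equation}
\frac{1}{Z_\Gamma(\gamma)}\int 1_{\{\cdot\}}\exp\Big(t\sum_{j=1}^k v_j M_{\Lambda_j}(\eta)\Big)\,e^{-U_\Gamma(\eta_\Gamma+\gamma_{\Gamma^c})}\,\pi^1(d\eta).
\end{equation}
Since $Z_\Gamma(\gamma)\ge e^{-|\Gamma|}$ by \eqref{partition lower}, that denominator only costs a harmless additive constant that is $o(t^2)$, so it suffices to bound the numerator with $\gamma$ dropped, i.e.\ to estimate $\int \exp\big(t\sum_j v_j M_{\Lambda_j}(\eta)\big)e^{-U_\Gamma(\eta)}\pi^1(d\eta)$.

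Next I would expand this Poisson integral over the number of points in each cell. Conditioning $\pi^1$ on $N_1=n_1,\ldots,N_k=n_k$ (with $n_j$ points placed uniformly and independently in $\Lambda_j$, and no points in $\Gamma_0$, which only helps the bound since $\varphi\ge 0$), the energy $U_\Gamma$ contains in particular the intra-cell contributions $\sum_j \binom{n_j}{2}\inf_{x,y\in\Lambda_j}\varphi(x-y)$; since $\Lambda_j-\Lambda_j\subset S$ and $\varphi\ge a$ on $S$, each such term is $\ge a\binom{n_j}{2}$. For the inter-cell terms: if $i,j$ are such that $\Lambda_i - \Lambda_j$ still meets $S$ (i.e.\ the pair is \emph{not} separable, so $\{i,j\}$ never both lie in a set $J\in K$), then again every cross pair contributes $\ge a$, giving an extra $\ge a n_i n_j$. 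Collecting: the Poisson sum is dominated by
\begin{equation}
\sum_{n_1,\ldots,n_k\ge 0}\ \prod_{j=1}^k \frac{|\Lambda_j|^{n_j}}{n_j!}\ \exp\Big(t\sum_j v_j n_j - a\sum_j \binom{n_j}{2} - a\!\!\sum_{\{i,j\}\notin K}\!\! n_i n_j\Big).
\end{equation}

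The final step is to estimate this sum. Fix the (finite) family $K$ of admissible "separable" subsets $J$. For a given choice of which cells are "large", one checks that the quadratic form $a\sum_j \binom{n_j}{2} + a\sum_{\{i,j\}\notin K} n_i n_j$ in the exponent dominates $\frac a2\big(\sum_{j\in J} n_j\big)^2$ up to linear corrections whenever the support of large $n_j$'s is forced to lie in some $J\in K$ — because any pair of indices both carrying $\Omega(t)$ points and \emph{not} in a common $J\in K$ would incur the $-a n_i n_j$ cross penalty, which is superlinear and kills that term. Hence for each summand there is $J\in K$ with exponent $\le t\sum_{j\in J} v_j n_j - \frac a2(\sum_{j\in J}n_j)^2 + O(t)(\ldots)$, and optimizing the scalar bound $t\langle v, n\rangle - \frac a2\|n\|_1^2$ over $n\ge 0$ supported on $J$ gives $\le \frac{t^2}{2a}\max_{j\in J} v_j^2 \le \frac{t^2}{2a}\sum_{j\in J}v_j^2 \le \frac{t^2}{2a}\max_{J\in K}\sum_{j\in J}v_j^2$, after absorbing the lower-order terms and the convergent Poisson prefactor (the sum over $n$ converges because of the negative quadratic). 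Taking $\log$, dividing by $t^2$, and letting $t\to\infty$ yields \eqref{upper lap eq}.

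The main obstacle is the bookkeeping in the last step: controlling the multi-index sum so that the negative quadratic terms genuinely force the maximizing configuration to be supported on some $J\in K$, while the $O(t)$ linear terms and the combinatorial prefactors $\prod |\Lambda_j|^{n_j}/n_j!$ are shown to be $o(t^2)$ after taking logs. One must be a little careful that the number of relevant summands (those with each $n_j = O(t)$, which is where the max is attained) is only polynomial in $t$, so $\log$ of the sum is $\log$ of the max plus $O(\log t) = o(t^2)$; tail summands with some $n_j \gg t$ are suppressed by the quadratic and contribute nothing.
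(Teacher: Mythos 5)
Your high-level plan matches the paper's: reduce via the DLR equation and $Z_\Gamma(\gamma)\geq e^{-|\Gamma|}$ to a Poisson expansion, use $\varphi\geq a$ on $S$ to extract a quadratic penalty of the form $-\frac a2\sum_j n_j^2 - a\sum_{(i,j)\in I}n_in_j$ (this is exactly Lemma~\ref{upper prob}), and then estimate the resulting multi-index sum. Up to that point you are on the paper's track.

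The genuine gap is in the last step, which you yourself flag as "the main obstacle." Your claim that the quadratic form $\frac a2\sum_j n_j^2 + a\sum_{(i,j)\in I}n_in_j$ "dominates $\frac a2\big(\sum_{j\in J}n_j\big)^2$ up to linear corrections" when the support is forced into some $J\in K$ is false, and in fact runs in the wrong direction: for $n$ supported on a set $J\in K$, there are \emph{no} cross-terms from $I$ (by definition of $K$), so the quadratic is only $\frac a2\sum_{j\in J}n_j^2$, which is \emph{smaller} than $\frac a2(\sum_{j\in J}n_j)^2$, not larger. Consequently the scalar optimization you carry out ($t\langle v,n\rangle - \frac a2\|n\|_1^2$, maximum $\frac{t^2}{2a}\max_j v_j^2$) is not what the sum gives; optimizing the correct exponent $t\sum_{j\in J}v_jn_j - \frac a2\sum_{j\in J}n_j^2$ separately over each $n_j$ yields the maximum $\frac{t^2}{2a}\sum_{j\in J}v_j^2$. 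You then "recover" by the chain $\max_{j\in J}v_j^2\leq\sum_{j\in J}v_j^2\leq\max_{J\in K}\sum_{j\in J}v_j^2$, but the first inequality in that chain is exactly where the bound is not tight — if the domination you claimed were true, you would have proved the false statement $\log(\cdot)\lesssim\frac{t^2}{2a}\max_j v_j^2$, which is strictly smaller than the truth whenever $K$ contains a set of size $\geq 2$.

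What is actually needed, and what the paper supplies via Lemma~\ref{int lem}, is a careful estimate of the sum $\sum_{n_1,\ldots,n_k}\exp\big(-c\sum n_j^2 - 2c\sum_{(i,j)\in I}n_in_j + t\sum v_jn_j\big)$. The paper iteratively applies the change of variables $m_1=n_1+n_2$, $m_2=n_1-n_2$ to each pair in $I$; this absorbs the cross-term $2c n_1 n_2$ into the diagonal term $c m_1^2$ and reduces to a sum with one fewer coordinate, effectively showing that only coordinates in some $J\in K$ can simultaneously be "large." Iterating $2^{k-1}-1$ times at most produces sums with no cross-terms at all, supported on sets in $K$, and those factorize and yield $\frac{t^2}{4c}\max_{J\in K}\sum_{j\in J}v_j^2$ (with $c = (1-\veps)a/2$). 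Your proposal identifies the right structure and the right reason why non-separable pairs are suppressed, but does not actually produce the estimate — the heuristic "the number of summands with $n_j=O(t)$ is polynomial" is not enough without an explicit handle on the quadratic form, and your quadratic-form manipulation, as written, is incorrect.
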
 

The proof of Proposition \ref{upper lap} proceeds via two lemmas.

\begin{lemma}\label{upper prob}
For the sets \(\Lambda_1,\ldots,\Lambda_k\in\mathcal{B}(\mathbb{R}^d)\) in Proposition \ref{upper lap} and any \(\veps\in(0,1)\), there exists \(n_0\in\mathbb{N}\) such that for any \(n_1,\ldots,n_k\geq n_0\),
\begin{equation}
\log P(M_{\Lambda_j}=n_j\,;\,j=1,\ldots,k)\leq -(1-\veps)\frac{a}{2}\sum_{j=1}^{k}n_j^2-a\sum_{(i,j)\in I}n_i n_j,
\end{equation}
where \(I=\{(i,j)\in\mathbb{N}^2\mid \text{\(1\leq i<j\leq k\), and \(x-y\in S\) for any \(x\in\Lambda_i,\ y\in\Lambda_j\)}\}\).
\end{lemma}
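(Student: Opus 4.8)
The plan is to obtain the estimate from the DLR equation by bounding the partition function $Z_\Lambda(\gamma)$ from below via the configurations we are interested in, and bounding the numerator (the integral over $A = \{M_{\Lambda_j} = n_j\}$) from above using the lower bound $a$ on $\varphi$. Concretely, fix a bounded $\Lambda \in \mathcal{B}(\mathbb{R}^d)$ containing $\bigcup_j \Lambda_j$ together with its $R$-neighborhood (where $R$ is the finite range), so that $A \in \mathcal{F}_\Lambda$ and, for all $\gamma$, $U_\Lambda(\eta_\Lambda + \gamma_{\Lambda^c})$ restricted to $A$ equals $U(\eta_\Lambda)$ plus a term depending only on $\gamma_{\Lambda^c}$ that cancels against the same term in $Z_\Lambda(\gamma)$. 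Then
\begin{equation}
P(M_{\Lambda_j} = n_j\,;\,j) = \int_{\mathcal{C}} \frac{1}{Z_\Lambda(\gamma)} \int_{\mathcal C} 1_A(\eta_\Lambda) e^{-U_\Lambda(\eta_\Lambda+\gamma_{\Lambda^c})}\pi^1(d\eta)\,P(d\gamma).
\end{equation}

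For the numerator, on the event $A$ one has $n = \sum_j n_j$ points, distributed so that $n_j$ of them lie in $\Lambda_j$. Each within-$\Lambda_j$ pair contributes $\varphi \geq a$ to the energy (since $x - y \in S$ forces $\varphi(x-y) \geq a$), giving $\binom{n_j}{2}$ such pairs; each cross pair with $(i,j) \in I$ also contributes $\varphi \geq a$, giving $n_i n_j$ such pairs. Hence $U(\eta_\Lambda) \geq a\sum_j \binom{n_j}{2} + a\sum_{(i,j)\in I} n_i n_j$ on $A$, and the numerator is bounded by $\prod_j \frac{|\Lambda_j|^{n_j}}{n_j!}$ times $\exp(-a\sum_j \binom{n_j}{2} - a\sum_{(i,j)\in I}n_i n_j)$ (times $e^{-|\Lambda|}$ from $\pi^1$). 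For the denominator, I would use a lower bound for $Z_\Lambda(\gamma)$ of the form $e^{-c}$ for a constant $c$ independent of $\gamma$ and the $n_j$'s (e.g. $\inf_\gamma Z_\Lambda(\gamma) \geq e^{-z|\Lambda|}$ via stochastic domination or directly from \eqref{partition upper} with the lower bound $-\log z$ on the local energy), so it contributes only an additive constant to $\log P$.

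Taking logarithms, $\log P(M_{\Lambda_j}=n_j\,;\,j) \leq \sum_j (n_j \log|\Lambda_j| - \log n_j!) - a\sum_j \binom{n_j}{2} - a\sum_{(i,j)\in I}n_i n_j + O(1)$. Now $a\binom{n_j}{2} = \frac{a}{2}n_j^2 - \frac{a}{2}n_j$, so the leading quadratic term is $-\frac{a}{2}\sum_j n_j^2 - a\sum_{(i,j)\in I}n_i n_j$, and the remaining terms $n_j\log|\Lambda_j| - \log n_j! + \frac{a}{2}n_j + O(1)$ are, by Stirling's formula, of order $O(n_j \log n_j) = o(n_j^2)$ uniformly as $\min_j n_j \to \infty$. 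Therefore, given $\veps \in (0,1)$, there is $n_0$ such that for all $n_j \geq n_0$ these lower-order terms are absorbed into $\veps \frac{a}{2}\sum_j n_j^2$, yielding the claimed bound. The main obstacle I anticipate is the bookkeeping to make the absorption of the $o(n_j^2)$ terms genuinely uniform in all $k$ variables simultaneously — one must check that $\sum_j(n_j\log|\Lambda_j| - \log n_j! + \tfrac a2 n_j) \le \veps\tfrac a2\sum_j n_j^2 + O(1)$ holds once every $n_j$ exceeds a common threshold, which follows since for each single index the function $t \mapsto t\log|\Lambda_j| - \log\Gamma(t+1) + \tfrac a2 t - \veps\tfrac a2 t^2$ is eventually decreasing to $-\infty$; the only subtlety is that the constant in $O(1)$ depends on $k$ and the $|\Lambda_j|$, which is harmless.
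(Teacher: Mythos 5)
Your overall strategy — DLR equation, bound the numerator using the finite range and the lower bound $\varphi \ge a$ on $S$, bound the partition function below uniformly in $\gamma$, and then absorb the sub‑quadratic terms by Stirling — is exactly the paper's argument, and your accounting of pairs and your final absorption step are correct. However, there is a misstatement in your setup that is worth fixing. You claim that, with $\Lambda$ chosen to contain $\bigcup_j\Lambda_j$ and its $R$‑neighborhood, $U_\Lambda(\eta_\Lambda+\gamma_{\Lambda^c})$ on the event $A$ decomposes as $U(\eta_\Lambda)$ plus a $\gamma$‑only term that cancels in $Z_\Lambda(\gamma)$. This is false: for a pairwise energy $U_\Lambda(\eta_\Lambda+\gamma_{\Lambda^c}) = U(\eta_\Lambda) + \sum_{x\in\operatorname{supp}\eta_\Lambda,\,y\in\operatorname{supp}\gamma_{\Lambda^c}}\varphi(x-y)$, and the cross terms depend on $\eta_\Lambda$ as well as $\gamma$, so they do not cancel against anything in $Z_\Lambda(\gamma)$. (They also cannot be killed by the $R$‑buffer, because the event $A=\{M_{\Lambda_j}=n_j\}$ places no constraint on $\eta$ in the buffer region; adding such a constraint would shrink $A$, which goes the wrong way for an upper bound.) Fortunately this costs nothing: since $\varphi\ge 0$ the cross terms are nonnegative, so $U_\Lambda(\eta_\Lambda+\gamma_{\Lambda^c})\ge U(\eta_\Lambda)$, which is the inequality the paper actually uses (with the simpler choice $\Lambda=\bigcup_j\Lambda_j$, no $R$‑buffer needed). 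The denominator bound should be the paper's \eqref{partition lower}, $Z_\Lambda(\gamma)\ge e^{-|\Lambda|}$, not the upper bound \eqref{partition upper} you cite. With these two corrections, the rest of your estimate is exactly the paper's: the numerator is at most $\prod_j\frac{|\Lambda_j|^{n_j}}{n_j!}\exp\bigl(-a\sum_j\binom{n_j}{2}-a\sum_{(i,j)\in I}n_in_j\bigr)$ (times the $\pi^1$ constant), and the $n_j\log|\Lambda_j|-\log n_j!+\tfrac a2 n_j$ terms are eventually dominated by $-\veps\tfrac a2 n_j^2$ once each $n_j$ exceeds a common threshold $n_0$, as you observe.
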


\begin{proof}
Let \(A\) denote the event \(\{M_{\Lambda_j}=n_j\,;\,j=1,\ldots,k\}\) and \(\Lambda=\bigcup_{j=1}^{k}\Lambda_j\).
It is obvious that \(U_{\Lambda}(\eta_{\Lambda}+\gamma_{\Lambda^c})\geq U(\eta_{\Lambda})\) for all \(\eta,\gamma\in\mathcal{C}\).
By the DLR equation and \eqref{partition lower}, we obtain
\begin{equation}
\begin{split}
P(A)&\leq e^{|\Lambda|}\int_{\mathcal{C}}1_{A}(\eta_{\Lambda})e^{-U(\eta_{\Lambda})}\pi^1(d\eta)\\
&\leq \prod_{j=1}^{k}\frac{|\Lambda_j|^{n_j}}{n_j!}\exp\big(-\frac{a}{2}\sum_{j=1}^{k}n_j(n_j-1)-a\sum_{(i,j)\in I}n_i n_j\big),
\end{split}
\end{equation}
which proves the lemma.
\end{proof}

\begin{lemma}\label{int lem}
Let \(c>0,\ v_1,\ldots,v_k>0\), and \(I\) be a subset of \(\{(i,j)\in\mathbb{N}^2\mid 1\leq i<j\leq k\}\).
Then, for any \(\veps>0\), there exists \(T>0\) such that for all \(t>T\),
\begin{equation}\label{int eq}
\begin{split}
&\sum_{n_1=0}^{\infty}\cdots\sum_{n_k=0}^{\infty} \exp\big(-c\sum_{j=1}^{k}n_j^2-2c\sum_{(i,j)\in I}n_i n_j+t\sum_{j=1}^{k}v_j n_j\big)\\
&\leq \exp\big((1+\veps)(\max_{J\in K_I}\sum_{j\in J}v_j^2)\frac{t^2}{4c}\big),\\
\end{split}
\end{equation} 
where \(K_I=\{J\subset\{1,\ldots,k\}\mid\text{\((i,j)\notin I\) whenever \(i,j\in J\)}\}\). 
\end{lemma}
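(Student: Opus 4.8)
The plan is to bound the multiple sum by splitting the index set $\{1,\ldots,k\}$ according to which of the $n_j$ are ``large'' and which are ``small'', and then optimizing. First I would observe that the quadratic form $Q(n) = c\sum_j n_j^2 + 2c\sum_{(i,j)\in I} n_i n_j$ is, up to the diagonal, the quadratic form attached to the graph $I$; for a fixed subset $J\subset\{1,\ldots,k\}$ the terms with $i\in J$, $j\notin J$ (or vice versa) are nonnegative and can simply be dropped to get a lower bound on $Q$, hence an upper bound on the summand. So the strategy is: for the part of the sum where the set of ``active'' (say, $n_j\geq 1$) indices is exactly some $J$, the exponent is bounded below by $c\sum_{j\in J}n_j^2 + 2c\sum_{(i,j)\in I,\ i,j\in J}n_i n_j - t\sum_{j\in J}v_j n_j$. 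When $J\in K_I$ the cross terms vanish identically, and the sum factorizes into one-dimensional Gaussian-type sums $\sum_{n\geq 0}\exp(-cn^2 + tv_j n)$, each of which is at most $C\exp(t^2 v_j^2/(4c))$ by completing the square (the maximum of $-cn^2+tv_jn$ over $n\in\mathbb{R}$ is $t^2 v_j^2/(4c)$, and the sum over integers is comparable up to a polynomial-in-$t$ factor, which is absorbed into the $(1+\veps)$ once $t$ is large). That produces the bound $\exp((1+\veps)(\sum_{j\in J}v_j^2)\,t^2/(4c))$ for that piece.

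For a general active set $J$ (not necessarily in $K_I$), I would argue that the presence of an edge $(i,j)\in I$ inside $J$ only helps: keeping the cross term $2c n_i n_j\geq 0$ in the exponent with a minus sign makes the summand smaller, so $\sum_{\mathrm{supp}(n)=J}$ is dominated by the same product bound but now the relevant quantity is not $\sum_{j\in J}v_j^2$ directly. The clean way is to note that for any $J$ we can pass to a maximal independent subset: choose $J'\subseteq J$ with $J'\in K_I$ and such that every vertex of $J\setminus J'$ is $I$-adjacent to some vertex of $J'$; then bound $Q(n)\geq c\sum_{j\in J'}n_j^2 + c\sum_{j\in J\setminus J'}n_j^2$ (discarding cross terms except using $n_i n_j\geq 0$) and observe $\max_{J'\in K_I}\sum_{j\in J'}v_j^2$ dominates $\sum_{j\in J'}v_j^2$ by definition of $K_I$. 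Actually the simplest correct route: for \emph{every} $n$, partition $\{j: n_j\geq 1\}$ and use that dropping all off-$K_I$ cross terms loses nothing (they are $\geq 0$), so $Q(n)\geq c\sum_j n_j^2$ always, giving the crude factorized bound $\prod_j \sum_{n\geq 0}e^{-cn^2+tv_j n}\leq \exp((1+\veps/2)(\sum_{j=1}^k v_j^2)t^2/(4c))$ — but this has $\sum_{j=1}^k v_j^2$, not $\max_{J\in K_I}$, so it is too weak when $I$ is nonempty. Hence the split by active set $J$ is genuinely needed, and within each $J$ one must exploit the cross terms to collapse $\sum_{j\in J}v_j^2$ down to a sum over an independent subset.

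The honest accounting is therefore: write the left side as $\sum_{J\subseteq\{1,\ldots,k\}} S_J$ where $S_J$ sums over $n$ with $\{j:n_j\geq 1\}=J$; for each $J$, pick a maximal $I$-independent $J^\ast\subseteq J$; bound the summand by dropping the $2cn_in_j$ terms with at least one endpoint outside $J^\ast$ \emph{except} that for $j\in J\setminus J^\ast$ we retain one edge to $J^\ast$, which after completing the square in the $J\setminus J^\ast$ variables pins them near $0$ and contributes only a bounded-in-$t$ amount; the remaining $J^\ast$-sum factorizes and gives $\exp((1+\veps/2)(\sum_{j\in J^\ast}v_j^2)\,t^2/(4c)) \leq \exp((1+\veps/2)(\max_{J'\in K_I}\sum_{j\in J'}v_j^2)\,t^2/(4c))$. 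Summing over the $2^k$ choices of $J$ multiplies by a constant, absorbed into $(1+\veps)$ for $t$ large.

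The main obstacle will be making the ``retain one edge to $J^\ast$ for each $j\in J\setminus J^\ast$, then complete the square'' step rigorous and uniform in $t$: one needs that for $j\notin J^\ast$ with an $I$-neighbor $i\in J^\ast$, the constraint $n_i\geq 1$ forces $-c n_j^2 - 2c n_i n_j + t v_j n_j \leq -c n_j^2 + (t v_j - 2c) n_j$, whose sum over $n_j\geq 1$ is bounded by $\exp((tv_j)^2/(4c) - \text{something})$ — which is \emph{not} obviously smaller than needed unless we are more careful, e.g. by instead using $2c n_i n_j \geq 2c n_j$ only when $n_i\geq 1$ and then noting the geometric decay. A cleaner alternative I might adopt: replace the constraint-on-active-set decomposition by the substitution $n_j = 0$ vs. $n_j\geq 1$ handled via $\sum_{n_j\geq 1} \leq$ a geometric-type tail once the exponent is negative for large $n_j$, and treat the true optimum by a Lagrange/convexity argument showing the continuous maximum of $-Q(n)+t\sum v_j n_j$ over $n\in\mathbb{R}_{\geq 0}^k$ equals $\max_{J\in K_I}(\sum_{j\in J}v_j^2)\,t^2/(4c)$ because the unconstrained critical point of a positive-definite-on-its-support block is attained on an $I$-independent face. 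That KKT computation — showing the maximizing face of the constrained quadratic program is exactly an element of $K_I$ — is the real content, and the passage from the continuous max to the discrete sum is then routine (polynomial correction absorbed into $1+\veps$).
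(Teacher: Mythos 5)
Your proposal does not contain a complete proof; it sketches two routes and explicitly flags the key step in each as unresolved. Your first route (split by the active set $J$, pick a maximal $I$-independent $J^\ast\subseteq J$, retain one edge from each $j\in J\setminus J^\ast$ to $J^\ast$) has the gap you yourself identify: after completing the square in $n_j$ for $j\in J\setminus J^\ast$, the contribution is $\exp\bigl((tv_j-2cn_{i(j)})^2/(4c)\bigr)$, which is comparable to $\exp(t^2v_j^2/(4c))$ and is \emph{not} ``pinned near $0$'' or ``bounded in $t$'' — so the $J\setminus J^\ast$ indices are not negligible. Your second route (compute the continuous maximum of $-Q(n)+t\langle v,n\rangle$ over $\mathbb{R}_{\geq 0}^k$, then transfer to the discrete sum) is sound in outline and the transfer is indeed routine (since $Q(n)\geq c|n|^2$ on the orthant, one bounds $-Q+t\langle v,n\rangle\leq -(1-\delta)c|n|^2 + M/\delta$ and absorbs the $t$-independent constant into $(1+\veps)$). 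But the central claim — that the continuous maximum equals $\max_{J\in K_I}\bigl(\sum_{j\in J}v_j^2\bigr)t^2/(4c)$ — is asserted, not proved, and the justification you give (``the maximizing face of the constrained quadratic program is exactly an element of $K_I$'') is not accurate as stated: when, say, $v_i=v_j$ for an edge $(i,j)\in I$, the maximizer set is a whole segment whose interior points have support containing the edge. The correct and provable statement is that \emph{some} maximizer has $I$-independent support; this follows from observing that for $(i,j)\in I$ the direction $d=e_i-e_j$ satisfies $d^{\mathsf T}Ad = c-2c+c=0$, so $-Q+t\langle v,\cdot\rangle$ is affine along $d$ and a maximizer can be slid to the boundary where $n_i=0$ or $n_j=0$; iterating gives a maximizer on an $I$-independent face, on which the cross terms vanish and the max is $\sum_{j\in J}v_j^2\,t^2/(4c)$. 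Without this argument (or an equivalent), your proposal has a genuine gap at its crux.

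For comparison, the paper takes a different, more combinatorial route: for an edge $(1,2)\in I$ it substitutes $m_1=n_1+n_2$, $m_2=n_1-n_2$, which collapses $n_1^2+n_2^2+2n_1n_2$ into $m_1^2$; bounding the $m_2$-sum by its two extreme terms and absorbing the polynomial factor into $(1+\veps)t$ reduces the $k$-variable sum over $I$ to two $(k-1)$-variable sums over $I$ with one vertex deleted. Iterating over all edges reduces to the $I=\emptyset$ case, which factorizes into one-dimensional Gaussian-type sums. Your convexity route, if completed with the affine-perturbation argument above, is a legitimate alternative and arguably makes the appearance of $K_I$ more transparent as an $\ell^2$-type variational quantity; the paper's iterative substitution has the advantage of staying entirely at the level of the discrete sums and avoiding the constrained-QP analysis and the discrete-to-continuous transfer.
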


The idea of the proof is to change of variables:
\begin{equation}
\sum_{n_1=0}^{\infty}\, \sum_{n_2=0}^{\infty} \exp(-(n_1+n_2)^2)= \sum_{m_1=0}^{\infty} \ \sum_{m_2\in\{m_1,m_1-2,\ldots,-m_1\}}\exp(-m_1^2)
\end{equation}
where we put \(m_1=n_1+n_2,\ m_2=n_1-n_2\).

\begin{proof}[Proof of Lemma \ref{int lem}]
We fix \(\veps>0\).
Let \(G(t;I)\) denote the left-hand side of \eqref{int eq}.
We have
\begin{equation}\label{int empty}
\begin{split}
G(t;\emptyset)&=\sum_{n_1=0}^{\infty}\cdots\sum_{n_k=0}^{\infty} \exp\big(-c\sum_{j=1}^{k}n_j^2+t\sum_{j=1}^{k}v_j n_j\big)\\
&\leq \prod_{j=1}^{k}\big(\int_0^{\infty}\exp(-cx^2+tv_jx)dx+\max_{x\in[0,\infty)}\exp(-cx^2+tv_jx)\big)\\
&\leq \exp\big((1+\veps)\sum_{j=1}^{k}v_j^2\frac{t^2}{4c}\big),\\ 
\end{split}
\end{equation}
for all sufficiently large \(t>0\).

Now we investigate the case that \(I\neq\emptyset\). We assume that \((1,2)\in I\) without loss of generality.
We consider
\begin{equation}\label{int left}
G_{0}(t)=\sum_{n_1=0}^{\infty}\,\sum_{n_2=0}^{\infty}\exp\big(-c(n_1^2+n_2^2)-2c(n_1 n_2+ n_1 X+n_2 Y)+t(v_1 n_1+v_2 n_2)\big)
\end{equation}
in the left-hand side of \eqref{int eq}, where we put
\begin{equation}
X=\sum_{\substack{(1,j)\in I\\ j\neq 2}}n_j,\quad Y=\sum_{(2,j)\in I}n_j.
\end{equation}
We put \(m_1=n_1+n_2,\ m_2=n_1-n_2\) and have
\begin{equation}
\begin{split}
G_{0}(t) &= \sum_{m_1=0}^{\infty}\exp\big(-cm_1^2-(cX+cY-\frac{tv_1}{2}-\frac{tv_2}{2})m_1\big)\\
&\quad\times\sum_{m_2\in\{m_1,m_1-2,\ldots,-m_1\}}\exp\big((-cX+cY+\frac{tv_1}{2}-\frac{tv_2}{2})m_2\big)\\
&\leq \sum_{m=0}^{\infty}(m+1)\exp\big(-cm^2-(cX+cY-\frac{tv_1}{2}-\frac{tv_2}{2})m\big)\\
&\quad \times \bigg\{\exp\big((-cX+cY+\frac{tv_1}{2}-\frac{tv_2}{2})m\big)\\
&\quad\quad\quad\quad\quad\quad\quad\quad\quad\quad\quad +\exp\big((cX-cY-\frac{tv_1}{2}+\frac{tv_2}{2})m\big)\bigg\}\\
&\leq \sum_{n_1=0}^{\infty}\exp(-cn_1^2-2cn_1X+(1+\veps)tv_1n_1)\\
&\quad\quad\quad\quad\quad\quad\quad\quad+ \sum_{n_2=0}^{\infty}\exp(-cn_2^2-2cn_2Y+(1+\veps)tv_2n_2),\\
\end{split}
\end{equation} 
for all \(t>T'\), where \(T'=1/(\veps\min_j v_j)\).
Hence, for all \(t>T'\), we obtain
\begin{equation}
\begin{split}
&G(t;I)\\
&\leq \sum_{n_j\geq0,\ \text{for}\ j\neq2} \exp\big(-c\sum_{j\neq 2}n_j^2-2c\sum_{(i,j)\in I_2}n_in_j+(1+\veps)t\sum_{j\neq2}v_jn_j\big) \\
&\quad+\sum_{n_j\geq0\ \text{for}\ j\neq1} \exp\big(-c\sum_{j\neq 1}n_j^2-2c\sum_{(i,j)\in I_1}n_in_j+(1+\veps)t\sum_{j\neq1}v_jn_j\big),
\end{split}
\end{equation}
where \(I_l=\{(i,j)\in I\mid i,j\neq l\}\ (l=1,2)\).

If \(I_1\neq\emptyset\) or \(I_2\neq\emptyset\), the above procedure can be repeated.
We iterate \(2^{k-1}-1\) times at most, and for all \(t>T'\), we obtain
\begin{equation}\label{int reduce}
\begin{split}
G(t;I)\leq &2^k\!\sum_{\{m_1,\ldots,m_l\}\in K_I}\\
&\times \sum_{n_1=0}^{\infty}\cdots\sum_{n_l=0}^{\infty}\exp\big(-c\sum_{j=1}^{l}n_j^2+(1+\veps)^k t\sum_{j=1}^{l}  v_{m_j}n_j\big).
\end{split}
\end{equation}
The lemma is proved by \eqref{int empty} and \eqref{int reduce}. 
\end{proof}

\begin{proof}[Proof of Proposition \ref{upper lap}]
We fix \(\veps\in(0,1)\). Let \([k]\) denote the set \(\{1,\ldots,k\}\).

From Lemma \ref{upper prob}, for \(n_0\in\mathbb{N}\) large enough, we have
\begin{equation}
\begin{split}
&\int_{\mathcal{C}}\exp\big(t\int_{\mathbb{R}^d}v(x)\eta(dx)\big)P(d\eta)\\
&=\sum_{J\subset [k]}\ \sum_{\substack{n_j>n_0\ \text{for}\ j\in J, \\ 0\leq n_j\leq n_0\ \text{for}\ j\in [k]\setminus J}}
\exp\big(t\sum_{j=1}^{k}v_j n_j\big)P(M_{\Lambda_j}=n_j\,;\,j\in [k])\\
&\leq k(n_0+1)\exp\big(tn_0\sum_{j=1}^{k}v_{j}\big)+
\sum_{J(\neq\emptyset)\subset [k]} k(n_0+1)\exp\big(tn_0\sum_{j\in [k]\setminus J}v_{j}\big)\\
&\quad\qquad\qquad\qquad\times \sum_{n_j>n_0\ \text{for}\ j\in J} \exp\big(t\sum_{j\in J} v_{j} n_{j}\big)P(M_{\Lambda_{j}}=n_{j}\,;\,j\in J)\\
&\leq \,k(n_0+1)\exp\big(tn_0\sum_{j=1}^{k}v_j\big)\bigg\{1+\sum_{J(\neq\emptyset)\subset [k]}\ \ \sum_{n_j\geq0\ \text{for}\ j\in J}\\
&\qquad\times\exp\big(-(1-\veps)\frac{a}{2}\sum_{j\in J}n_j^2-(1-\veps)a\sum_{(i,j)\in I(J)}n_i n_j +t\sum_{j\in J}v_j n_j \big) \bigg\},
\end{split}
\end{equation}
where \(I(J)=\{(i,j)\in J\times J \mid \text{\(i<j\), and \(x-y\in S\) for any \(x\in\Lambda_i,\ y\in\Lambda_j\)}\}\).

From Lemma \ref{int lem}, for any nonempty subset \(J\subset\{1,\ldots,k\}\) and all sufficiently large \(t>0\), we obtain
\begin{equation}
\begin{split}
&\sum_{n_j\geq0\ \text{for}\ j\in J}\exp\big(-(1-\veps)\frac{a}{2}\sum_{j\in J}n_j^2-(1-\veps)a\sum_{(i,j)\in I(J)}n_i n_j +t\sum_{j\in J}v_j n_j \big)\\
&\leq \exp\big(\frac{1+\veps}{1-\veps}(\max_{J'\in K(J)}\sum_{j\in J'}{v_j}^2) \frac{t^2}{2a}\big),
\end{split}
\end{equation}
where \(K(J)=\{J'\subset J\mid\text{\((i,j)\neq I(J)\) whenever \(i,j\in J'\)}\}\).

Since \(K(J)\subset K\), we obtain \eqref{upper lap eq}.
\end{proof}

\begin{proof}[Proof of Theorem \ref{strong pair} (b)]
We put \(u(x)=-u_{0}(-x)\) and \(u_n\) as \eqref{u_n}. We note that for each \(n\in\mathbb{N}\), \(u_{n,\bm{j}}=0\) except for finitely many \(\bm{j}\in\mathbb{Z}^d\). We fix \(\veps>0\).
From Proposition \ref{upper lap}, for all sufficiently large \(n\in\mathbb{N}\) and all sufficiently large \(t>0\), we have
\begin{equation}
\begin{split}
&\log\big(\int_{\mathcal{C}}\exp\big(t\int_{\mathbb{R}^d} u(x)\eta(dx)\big)P(d\eta)\big)\\
&\leq \log\big(\int_{\mathcal{C}}\exp\big(t\int_{\mathbb{R}^d} u_n(x) \eta(dx)\big)P(d\eta)\big)\\ 
&\leq (1+\veps)(\max_{J\in K_n}\sum_{\bm{j}\in J}{u_{n,\bm{j}}}^2)\frac{t^2}{2a},\\
\end{split}
\end{equation}
where for each \(n\in\mathbb{N}\), we put
\begin{multline}
K_n=\{J\subset \mathbb{Z}^d \mid \text{ for all \(\bm{i}\neq\bm{j}\in J\), there exist \(x\in\Lambda_{n,\bm{i}}\) and \(y\in\Lambda_{n,\bm{j}}\)}\\
\text{such that \(x-y\in S^c\) } \}.
\end{multline}
If \(\{\bm{i},\bm{j}\}\in K_n\ (\bm{i}\neq \bm{j})\), then for any \(x\in\Lambda_{n,\bm{i}}\) and \(y\in\Lambda_{n,\bm{j}}\), we have \(x-y\in (S_{2\sqrt{d}/n})^c\).
This implies that
\begin{equation}\label{lap S}
\log\big(\int_{\mathcal{C}}\exp\big(t\int_{\mathbb{R}^d} u(x)\eta(dx)\big)P(d\eta)\big)\leq (1+\veps)\|u_n\|^2_{S_{2\sqrt{d}/n}}\frac{t^2}{2a},
\end{equation}
for all sufficiently large \(n\in\mathbb{N}\) and all sufficiently large \(t>0\).

We put
\begin{equation}
t=\frac{a|\lambda|}{(1+\veps)\|u_n\|^2_{S_{2\sqrt{d}/n}}},
\end{equation}
and from \eqref{upper}, \eqref{lap S}, and Lemma \ref{upper2} we obtain \eqref{strong pair b}.
\end{proof}

\section*{Acknowledgments}
The author would like to thank Professor Naomasa Ueki for the useful discussions.
The author is grateful to Kumano Dormitory, Kyoto University for their generous financial and living assistance.

\end{document}